\theoremstyle{definition}
\newtheorem{definition}{Definition}
\theoremstyle{plain}
\newtheorem{theorem}{Theorem}
\newtheorem{corollary}{Corollary}
\newtheorem{lem}{Lemma}
\newtheorem{prop}{Proposition}
\newtheorem{ejem}{Example}
\theoremstyle{remark}
\newtheorem{remark}{Remark}
\DeclareMathOperator{\CC}{\mathbb C}
\DeclareMathOperator{\RR}{\mathbb R}
\DeclareMathOperator{\NN}{\mathbb N}
\DeclareMathOperator{\supp}{Supp}
\title{Zeros of Sobolev orthogonal polynomials and some matrix inequalities. 
 }
\author[1,2]{ Escribano, C.$^{1,2}$}
\author[1]{ Gonzalo, R.$^{1}$}
\address{ Departamento de Matem\'atica Aplicada, Facultad de Inform\'atica de Madrid\\
        Universidad Polit\'ec\-ni\-ca, Campus de Montegancedo\\
      Boadilla del Monte, 28660 Madrid Spain, Phone: +34913367429 \\
      }
\email{cescribano@fi.upm.es}
\email{rngonzalo@fi.upm.es}
\begin{document}

\begin{abstract} The main aim of this work is to apply the  matrix approach of ortho\-gonal polynomials associated with infinite Hermitian definite positive matrices in relation with an important question regarding the  location of zeros of Sobolev orthogonal polynomials via the study of the boundedness of multiplication operator. 
We apply the notion of bounded point evaluations of a measure, and more generally to infinite HPD matrices, to the problem of boundedness of multiplication operator. Moreover, we introduce certain Wirtinger-type inequalities, relating the norm of the polynomials with the norm of their derivatives, in order to provide new examples of Sobolev polynomials for which we may ensure that the zeros of Sobolev polynomials are uniformly bounded. In particular, we consider the case of Lebesgue measures supported on circles. With these techniques, we may obtain many examples of vectorial measures such that the zeros of Sobolev orthogonal polynomials are bounded, and nevertheless they are not sequentially dominated, not even matrix sequentially dominated. 
\end{abstract}

\maketitle
\begin{quotation} {\sc {\footnotesize Keywords}}. {\small Hermitian moment problem,  Sobolev inner products, orthogonal polynomials,
 measures, matrix inequalities}
\end{quotation}

\section{Introduction}

\noindent The problem of the study of the location of zeros of orthogonal polynomials  with respect to Sobolev inner products of the form: 
\begin{equation}\label{sobolev-inner}
<p(z),q(z)=\sum_{k=0}^{N} \int p^{(k)}(z)\overline{q^{(k)}(z)}d\mu_k,
\end{equation}
\noindent where each $\mu_k$ is a finite Borel measure supported on the complex plane and $\mu_0$ is infinitely supported, has attracted much attention in the last years (see e.g. \cite{LP},\cite{LPP}). We are concerned here with the problem of boundedness of the set of zeros of Sobolev orthogonal polynomials in the case that all the measures involved are compactly supported. 

\bigskip 

\noindent Concerning this problem, for the standard case, that is, for orthogonal polynomials associated with a measure $\mu$, the problem of boundedness of the set of zeros is closely related to the boundedness of the multiplication operator on the space of polynomials $\mathbb{P}[z]$,  $\mathcal{D}: \mathbb{P}[z]\to \mathbb{P}[z]$ given by $\mathcal{D}(p)=zp(z)$,   with respect to the $L^{2}(\mu)$ norm. Indeed, for measures supported in the real line, the boundedness of zeros of orthogonal polynomials is equivalent to the boundedness of the multiplication operator, which is indeed equivalent to the boundedness of the support of the measure. This last equivalence, among the boundedness of the support of a measure and the associated multiplication operator, is also true for measures supported on the complex plane. However, boundedness of $\mathcal{D}$  is a sufficient condition for boundedness of the set of zeros, but not a necessary condition. In \cite{Castro-Duran} an easy example is provided of a non-compactly supported measure whose set of zeros of associated orthogonal polynomials is bounded.

\noindent For compactly supported measures on the complex plane,  G. L\'{o}pez and H. Pijeira \cite{LP}, proved that the boundedness of $\mathcal{D}$ with respect to the Sobolev norm is a sufficient condition for the boundedness of the set of zeros of Sobolev orthogonal polynomials. Note that if $\mathcal{D}$ is bounded, then all the measures involved $\mu_k's$ are compactly supported (see, e.g. \cite{Castro-Duran} ). Therefore, one way to approach the problem of boundedness of zeros of Sobolev polynomials is by studying the  boundedness of $\mathcal{D}$. Moreover, in \cite{LP}  the notion of sequentially dominated measures is introduced, that is, when measures $\mu_k=w_kd\mu_{k-1}$ with $w_k\in L^{\infty}(\mu_{k-1})$,  as a sufficient condition to ensure the boundedness of $\mathcal{D}$. 
For non-compactly supported measures in \cite{Duran-Saff} some results of location of zeros are given also using the notion of sequentially dominated measures. Later, J. Rodríguez ( see e.g. \cite{JR1},\cite{JR3}) gave a characterization of boundedness of multiplication operator in terms of certain comparable Sobolev norms. In fact, he proved that $\mathcal{D}$ is bounded with respect to the Sobolev norm if and only if this norm is equivalent to a certain {\it sequential} inner Sobolev norm. 

\bigskip  

\noindent  We here consider the more general context of orthogonal polynomials associated with an Hermitian positive definite matrix, and matrix Sobolev inner products, which are the generalization of inner Sobolev products with respect to measures, using the matrix approach and techniques introduced in \cite{EGT6}, \cite{EG23}. 

\bigskip
\noindent  Recall that any infinite Hermitian positive definite matrix $\mathbf{M}$ (in short HPD matrix) induces an inner product on $\mathbb{P}[z]$ in the following way:
for $p(z)=\sum_{k=0}^{n}v_kz^k, 
q(z)=\sum_{k=0}^{m}w_kz^k \in \mathbb{P}[z]$, 
%$$ \label{aste} \langle p(z),q(z) \rangle_{\mathbf{M}}=v\mathbf{M}w^{*}$$

\begin{equation}\label{aste}
    \langle p(z)
,q(z) \rangle_{\mathbf{M}}=v\mathbf{M}w^{*},
\end{equation}

\noindent where $v=(v_0,v_1,\dots,v_n,0,\dots,0,\dots), w=(w_0,w_1,\dots,w_m,0,\dots)\in c_{00}$, being   $c_{00}$  the space
of all complex sequences with only finitely many non-zero entries. 
In this way, the connection of measures with the matrix approach is via the associated moment matrices. We always consider Borel finite compactly supported on the complex plane measures $\mu$ and the moment matrix is $\mathbf{M}(\mu)=(c_{i,j})_{i,j=0}^{\infty}$ defined by 
$$
c_{ij}= \int z^{i}\overline{z}^{j} d\mu .
$$ 
\bigskip

\noindent  In this general context, concerning the problem of the boundedness of zeros of orthogonal polynomials, in \cite{EGT6} it is shown that the boundedness of $\mathcal{D}$
with respect to  (2)  is also a sufficient condition for the boundedness of the zeros. Moreover, 
in \cite{EG23} the problem of boundedness of zeros of Sobolev orthogonal polynomials is studied in a more general framework than in (1) which are the matrix Sobolev inner products. In the same way as in \cite{LP} a notion of {\it sequentially dominance} of matrices is introduced, and it is proved that this notion is indeed a sufficient condition for boundedness of $\mathcal{D}$, and consequently of the set of zeros of associated orthogonal polynomials. 

\noindent 

\smallskip 
\noindent In this work we focus in the case of Sobolev norms associated to a vectorial measure $(\mu_0,\mu_1)$ and more generally, associated to a pair of infinite HPD matrices $\mathbf{M}_0,\mathbf{M}_1$  of the type 
\begin{equation}{\label{dosmatrices}}
\Vert p(z)\Vert^{2}_{\mathbf{M}_{\mathbf{S}}}= \Vert p(z)\Vert^{2}_{\mathbf{M}_0}+ \Vert p'(z)\Vert^{2}_{\mathbf{M}_1}.
\end{equation}

\noindent We study the following problem, which generalizes the one of boundedness of $\mathcal{D}$ with respect to the vectorial measure when all the measures involved are compactly supported.

\medskip

\noindent {\bf Problem:}
Assume that $\mathcal{D}$ is bounded on $(\mathbb{P}[z], \Vert \cdot \Vert_{\mathbf{M_k}})$ for $k=0,1$. Is $\mathcal{D}$ bounded on $(\mathbb{P}[z], \Vert \cdot \Vert_{\mathbf{M_S}})$ ?

\medskip

\noindent In most of the results previously obtained on this topic, the only technique used to guarantee boundedness is {\it sequentiality}, either of measures or associated moment matrices. Our motivation in this work is to obtain conditions that are essentially different from those of sequentiality.

\noindent As a consequence of a characterization of the boundedness of $\mathcal{D}$  that we will prove here, we should look for conditions to assure the existence of a constant $C>0$ such that 

\begin{equation}\label{desigualdad}
    \Vert p(z) \Vert^{2}_{\mathbf{M}_1} \leq C(\Vert p(z) \Vert^{2}_{\mathbf{M}_0}+ \Vert p'(z) \Vert^{2}_{\mathbf{M}_1}), \qquad p(z) \in \mathbb{P}[z]. 
\end{equation}

\smallskip 

\noindent One could try to obtain inequalities of the type 
$$
\Vert p(z) \Vert^{2}_{\mathbf{M}_1} \leq C\Vert p(z) \Vert^{2}_{\mathbf{M}_0}, \qquad p(z) \in \mathbb{P}[z]. 
$$

\noindent This is the idea behind the notions of sequentiality of measures, or even more so in matrices to ensure boundedness of the multiplication operator. 

\smallskip
 
\noindent However, one could try to find inequalities involving the norms of the polynomials and their derivatives, that is, conditions of the type: 
\begin{equation}\label{Wirtinger}
  \Vert p(z) \Vert^{2}_{\mathbf{M}_1} \leq C\Vert p'(z) \Vert^{2}_{\mathbf{M}_1}, 
\end{equation}
\noindent for polynomials verifying $p(0)=0$. In this sense,   the classical Wirtinger inequality states that there exists a constant $C>0$ such that for every $f(t)\in \mathcal{C}^{1} [a,b]$ with
$f(a)=f(b)$ it follows 
$$
\int_{a}^{b} \vert f(t) \vert^{2} dt \leq C \int_{a}^{b}  \vert f'(t) \vert^{2} dt.
$$

\noindent Here we approach the study, from a matrix point of view,  of this type of inequalities in a certain class of polynomials that vanish at a certain point, which we will call {\it polynomial Wirtinger-type  inequalities}. This, combined with the study of bounded point evaluations of a measure, or more generally of an HPD matrix as in \cite{EGT5}, will allow us to obtain new examples of Sobolev inner products for which the zeros of the associated orthogonal polynomials are bounded. Furthermore, for all these examples obtained, there is no sequentiality of the involved measures, or even matrix sequentiality of the same.

\bigskip
\noindent The structure of the paper is as follows. 

\medskip

\noindent In Section 1 we introduce matrix Sobolev inner products as in \cite{EG23} for a set of two HPSD matrices with the first positive definite. The main result will be a simple and useful characterization of the boundedness of the multiplication operator that generalizes the one obtained by J. Rodríguez (see, eg. \cite{JR1})  for Sobolev inner productos associated with measures.

\medskip

\noindent In the second section, we will study the bounded point evaluations of a matrix and the impact of their existence on the problem of boundedness of $\mathcal{D}$. 
We provide several characterizations of the bounded point evaluations associated with an HPD matrix
in terms of a certain matrix index introduced in \cite{EGT6}.
As a consequence of these results we obtain examples of vectorial measures which are matrix sequentially dominated and consequently the set of zeros of Sobolev polynomials is bounded. These results are more precise when we consider vectorial measures where the first measure is supported in a Jordan curve, for which polynomials are not dense in the corresponding $L^{2}$ spaces, and the second measure is supported in the interior of the curve.  
%For instance, we prove that if the first measure is the Lebesgue measure on the unit circle ${\bf m}$ and the second measure is any measure supported on a compact set in the unit open disk then the set of zeros of associated Sobolev polynomials is bounded. 

  \medskip
\noindent The third section is devoted to the study of measures that verify certain polynomial Wirtinger-type inequalities using a matrix approach. Such inequalities hold for instance for the Lebesgue measure in the unit circle ${\bf m}$, and more generally for measures of the type $w(\theta)d{\bf m}$ with $w(\theta)\in L^{\infty}({\bf m})$ and ${\it ess\; inf w(\theta)>0}$. Moreover, we prove that if the associated moment matrix of a measure verifies a certain polynomial Wirtinger-type inequality (4) with constant $C=1$ it must be of the type $c{\bf m}$. 

\medskip 

\noindent Finally, combining the results of the previous sections, we obtain new examples of vectorial measures for which the zeros of the associated orthogonal polynomials are bounded and yet they are not sequentially dominated,  not even matrix sequentially dominated. These examples are closely related to inner products, where the second measure is the Lebesgue measure supported on circles or unions of circles. Moreover, using these techniques, we give an example of two vectorial measures whose associated norms are comparable, in the sense that they are equivalent in the vector space of polynomials, and yet the involved measures are not comparable component by component (that is the associated norms are not equivalent in the respective $L^2$ spaces). 

\bigskip
\noindent Some notations and definitions: 

 \noindent  In an analogous way to the case of HPD matrices, for an infinite positive semidefinite matrix $\mathbf{M}$ (in short HPSD matrix) the expression $(2)$ defines an Hermitian sesquiliar form that induces a seminorm that we denote in the same way.

%Next, since our aim is to obtain inequality (\ref{desigualdad}) for all polynomials, having in mind that every polynomial can be written as 
% $p(z)=p(a)+q(z)$ with $q(a)=0$, we try conditions of the type: there exists a constant such that for every polynomial $p(z)$ 
% $$
% \vert p(a) \vert^{2} \leq C \Vert p(z) \Vert^{2}_{\mathbf{M}_0}
% $$

% \noindent This inequality is closed related to bounded point evaluations of measures, when the matrices are moment matrices, and this notion can be easily generalized in the context of HPD matrices. 

\begin{definition} Let $\mathbf{M}_0,\mathbf{M}_1$  be two HSPD  matrices. We say that
 $\mathbf{M}_1 \leq \mathbf{M}_0$ if
$
v\mathbf{M}_1v^{*} \leq v \mathbf{M}_0v^{*}$,    for every $v\in c_{00}$.

\end{definition}

%\noindent Using this notation an  infinite Hermitian matrix $\mathbf{A}$ is positive definite (in short HPD) if $ \mathbf{A}>0$. and is an infinite Hermitian matrix is positive semidefinite (in short HPSD) if $\mathbf{A} \geq 0$. 

\noindent Note that if $\mathbf{M}_1 \leq C\mathbf{M}_0$ for some $C>0$, then 
$$
\Vert p(z) \Vert_{\mathbf{M}_1} \leq C \Vert p(z)\Vert_{\mathbf{M}_0}, \qquad p(z) \in \mathbb{P}[z],
$$
\noindent where the expressions $\Vert \cdot \Vert_{\mathbf{M_i}}$ for $i=0,1$ are seminorms if the associated matrices are HPSD. 

\bigskip
\noindent Thorough  the paper all the measures considered are Borel finite measures supported in the complex plane. We always consider vectorial measures $\mu=(\mu_0,\mu_1)$ with the involved measures compactly supported and with the first measure $\mu_0$  infinitely supported. For $\mu=(\mu_0,\mu_1)$, we consider the Sobolev inner product on $\mathbb{P}[z]$,

\begin{equation}{\label{dosmedidas}}
<p(z),q(z)>_{\mathbf{S}}= \int \vert p(z) \vert^{2} d\mu_0 + \int \vert p'(z) \vert^{2} d\mu_1.
\end{equation}

\noindent We denote by $\Vert \cdot \Vert_{\mathbf{S}}$ the associated Sobolev norm with $(\mu_0,\mu_1)$ on $\mathbb{P}[z]$. 

\section{Characterization of boundedness of multiplication operator for matrix Sobolev inner products}
\noindent We recall the definition of matrix Sobolev inner products introduced in \cite{EG23}:  

\begin{definition} Let $\{\mathbf{M}_0,\mathbf{M}_1\}$ be a set of infinite HPSD matrices  with $\mathbf{M}_0$ being HPD,  the induced matrix Sobolev inner  product, denoted by $<\cdot,\cdot>_{\mathbf{M_S}}$,  is defined in the following way:
\begin{equation}{\label{dosmatrices}}
<p(z),q(z)>_{\mathbf{M}_{\mathbf{S}}}= <p(z),q(z)>_{\mathbf{M}_0}+ <p'(z),q'(z)>_{\mathbf{M}_1}, \quad p(z),q(z)\in \mathbb{P}[z].
\end{equation}

\noindent We denote  by $\Vert \cdot \Vert_{\mathbf{M_S}}$ the induced matrix Sobolev norm, that is, 

$$
\Vert p(z) \Vert^{2}_{\mathbf{M}_{\mathbf{S}}}= 
\Vert p(z) \Vert^{2}_{\mathbf{M}_0} + \Vert  p'(z)\Vert^{2}_{\mathbf{M}_1}.
$$
\end{definition}

%\begin{definition} (\cite{EG1})Given  a set of  HPSD matrices $\{\mathbf {M}_j\}_{j=0}^{k}$,  with $\mathbf{M}_0$ being HPD. We define the matrix Sobolev inner product on $\mathbb{P}[z]$ as 
%$$<p(z),q(z)>_{\mathbf{M}_{S}}=\sum_{j=0}^{k}  <p^{(j)},q^{(j)}>_{\mathbf{M}_j}=v\sum_{j=0}^{k} \mathbf{\Lambda}^{j}\mathbf{M}_j\left(\mathbf{\Lambda}^{j}\right)^{*} w^{*}$$   
%where ${\mathbf{M}_{S}}=\sum_{j=0}^{k} \mathbf{\Lambda}^{j}\mathbf{M}_j\left(\mathbf{\Lambda}^{j}\right)^{*} $ and $\mathbf{{\Lambda}}^{j}$ as in (\ref{Filk}). 

%\noindent  We denote by $\Vert p(z) \Vert_{\mathbf{M_S}}$ the associated norm with the inner product $<p(z),q(z)>_{\mathbf{M}_{S}}$.
%$$\mathbf{M}_S= \mathbf{M}_{0}
%+\mathbf{D}\mathbf{M}_{1}\mathbf{D}$$
%\end{definition}

\noindent Following \cite{EG23}, a set of HPSD matrices $\{\mathbf{M}_0,\mathbf{M}_1\}$ is a set of {\it sequentially dominated matrices} if there exists $C>0$ such that $\mathbf{M}_1 \leq C \mathbf{M}_0$. In (\cite{EG23} Theorem 1), it is shown that if a set of HPD matrices $\{\mathbf{M}_0,\mathbf{M}_1\}$ is sequentially dominated, then $\mathcal{D}$ is bounded with respect to the matrix Sobolev inner product (\ref{dosmatrices}). The same proof works with $\mathbf{M_1}$ being HPSD.

\bigskip

\noindent We provide a generalization of the characterization of the boundedness of the multiplication operator given in \cite{JR1} in the context of matrix Sobolev inner products (\ref{dosmatrices}). Although we use the same arguments, we include the proof for the sake of completeness.

\begin{prop} Let $\Vert \cdot \Vert_{\mathbf{M_S}}$ be a matrix Sobolev norm associated with a pair of HPSD matrices  $\{\mathbf{M}_0,\mathbf{M}_1\}$  with $\mathbf{M}_0$ being HPD. Assume that $\mathcal{D}$ is bounded on 
$(\mathbb{P}[z],\Vert \cdot \Vert_{\mathbf{M}_0})$ and  there exists $C_1>0$ such that 
$$
\Vert zp(z) \Vert^{2}_{\mathbf{M}_1} \leq C_1 
\Vert p(z) \Vert^2_{\mathbf{M}_1
},\qquad p(z) \in \mathbb{P}[z].
$$
\noindent   Then, the following are equivalent: 
\begin{enumerate} 
\item  $\mathcal{D}$ is bounded on $(\mathbb{P}[z],\Vert \cdot \Vert_{\mathbf{M_S}})$.  

\item There exists  $C>0$ such that

\begin{equation}{\label{condicion}}
\Vert p(z)\Vert_{\mathbf{M}_{1} } \leq C\Vert p(z) \Vert_{\mathbf{M}_{\mathbf{S}}}, \qquad p(z) \in \mathbb{P}[z].
\end{equation}
\end{enumerate}

\end{prop} 

%\begin{prop} Let $\Vert \cdot \Vert_{\mathbf{M_S}}$ be a matrix Sobolev norm associated with a set $\{\mathbf{M}_j\}_{j=0}^{k}$ of HPSD matrices with $\mathbf{M}_0$ being HPD. Assume that for each $j=0,\dots,k$ there exists a constant $C_{j}$ such that for every polynomial $p(z)\in \mathbb{P}[z]$ it holds  
%$$
%\Vert p(z) \Vert^{2}_{\mathbf{M}_j
%} \leq C_{j} \Vert p(z) \Vert^2_{\mathbf{M}_j
%} 
%$$
%\noindent   Then, the following are equivalent: 
%\begin{enumerate} 
%\item The multiplication operator $\mathcal{D}$ is bounded on $(\mathbb{P}[z],\Vert \cdot \Vert_{\mathbf{M}_{S}})$.  
%\item There exists a constant $C>0$ such that for every polynomial $p\in \mathbb{P}[z]$ and $j=1,\dots,k$ 

%$$
%\Vert p^{(j-1)}(z)\Vert^{2}_{\mathbf{M}_{j} } \leq C\Vert p(z) \Vert_{\mathbf{M}_{\mathbf{S}}}
%$$
%\end{enumerate} 

\begin{proof} For every polynomial $p(z)\in \mathbb{P}[z]$

$$
\Vert \mathcal{D}(p(z)) \Vert^2_{\mathbf{M}_{\mathbf{S}}}=  \Vert zp(z) \Vert^2_{\mathbf{M}_0} + \Vert (zp(z))' \Vert^2_{\mathbf{M}_1}.
$$
\bigskip

\noindent First, since $\mathcal{D}$ is bounded on $(\mathbb{P}[z], \Vert \cdot \Vert_{\mathbf{M}_0})$ by taking  $R_0= \Vert \mathcal{D}_0 \Vert $ we have that 

$$\Vert zp(z) \Vert^2_{\mathbf{M}_0} \leq R^2_{0} \Vert p(z)\Vert^2_{\mathbf{M}_0} \leq R_0^2 \Vert p(z)\Vert^2_{\mathbf{M}_{\mathbf{S}}}.
$$

\noindent On the other hand 
$$
\Vert (zp(z))' \Vert^2_{\mathbf{M}_1}= \Vert p(z) + zp'(z) \Vert^{2}_{\mathbf{M}_1} \leq (\Vert p(z) \Vert_{\mathbf{M}_1} + \Vert zp'(z) \Vert^{2}_{\mathbf{M}_1} )^2 
$$

$$ \leq 
2( \Vert p(z) \Vert_{\mathbf{M}_1}^{2} + \Vert zp'(z) \Vert^{2}_{\mathbf{M}_1} )\leq
2\Vert p(z) \Vert_{\mathbf{M}_1}^{2} + 2C_1^2\Vert p'(z) \Vert^{2}_{\mathbf{M}_1} \leq 2(C^2+C_1^2) \Vert p(z) \Vert_{\mathbf{M}_{\mathbf{S}}}^{2} 
$$

\smallskip

\noindent By considering  $R=R_{0}^{2}+2(C^2+C_1^2) $ have that for all $p(z)\in \mathbb{P}[z]$ 
$$
\Vert \mathcal{D}(p(z)) \Vert^2_{\mathbf{M}_{\mathbf{S}}}= \Vert zp(z)\Vert_{\mathbf{M}_{\mathbf{S}}}^{2} \leq R \Vert p(z) \Vert^2_{\mathbf{M}_{\mathbf{S}}},
$$

\noindent consequently 
$\mathcal{D}$ is bounded on $(\mathbb{P}[z], \Vert \cdot \Vert_{\mathbf{M_S}})$ as we required. 
\end{proof}

\noindent We now obtain the generalization in the more general context of matrix Sobolev norms of the characterization of the boundedness of the multiplication operator given by \cite{JR3}.

\begin{theorem} Let $\Vert \cdot \Vert_{\mathbf{M_S}}$ be a matrix Sobolev norm associated with a pair of HPSD matrices  $\{\mathbf{M}_0,\mathbf{M}_1\}$  with $\mathbf{M}_0$ being HPD. Assume that $\mathcal{D}$ is bounded on 
$(\mathbb{P}[z],\Vert \cdot \Vert_{\mathbf{M}_0})$. Then, the following are equivalent: 
\begin{enumerate}
\item $\mathcal{D}$ is bounded on $(\mathbb{P}[z], \Vert \cdot \Vert_{\mathbf{M_S}})$.
\item $\Vert \cdot \Vert_{\mathbf{M_S}}$ is equivalent to $\Vert \cdot \Vert_{\mathbf{M_{S'}}}$ on $\mathbb{P}[z]$ which is $\Vert \cdot \Vert_{\mathbf{M_{S'}}}$ the matrix Sobolev inner product associated with $\{\mathbf{M}_0+\mathbf{M}_1,\mathbf{M}_1\}$
\end{enumerate}
\end{theorem}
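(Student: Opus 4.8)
The plan is to reduce the theorem to Proposition 1 by recognising that statement (2) is merely a reformulation, in the intrinsic language of equivalent Sobolev norms, of the inequality (\ref{condicion}) of Proposition 1 --- this being the form in which Rodr\'iguez phrased his characterization. The starting point is the elementary identity
\[
\|p(z)\|_{\mathbf{M_{S'}}}^{2}=\langle p(z),p(z)\rangle_{\mathbf{M}_0+\mathbf{M}_1}+\|p'(z)\|_{\mathbf{M}_1}^{2}=\|p(z)\|_{\mathbf{M}_0}^{2}+\|p(z)\|_{\mathbf{M}_1}^{2}+\|p'(z)\|_{\mathbf{M}_1}^{2}=\|p(z)\|_{\mathbf{M_S}}^{2}+\|p(z)\|_{\mathbf{M}_1}^{2},
\]
valid for every $p(z)\in\mathbb{P}[z]$ and relying only on the additivity $v(\mathbf{M}_0+\mathbf{M}_1)v^{*}=v\mathbf{M}_0v^{*}+v\mathbf{M}_1v^{*}$ on $c_{00}$. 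In particular $\|p(z)\|_{\mathbf{M_S}}\le\|p(z)\|_{\mathbf{M_{S'}}}$ always, so $\|\cdot\|_{\mathbf{M_S}}$ and $\|\cdot\|_{\mathbf{M_{S'}}}$ are equivalent on $\mathbb{P}[z]$ exactly when there is $A>0$ with $\|p(z)\|_{\mathbf{M_{S'}}}^{2}\le A\|p(z)\|_{\mathbf{M_S}}^{2}$ for all $p(z)$, which by the identity reads $\|p(z)\|_{\mathbf{M}_1}^{2}\le(A-1)\|p(z)\|_{\mathbf{M_S}}^{2}$, i.e. condition (\ref{condicion}) with $C^{2}=A-1$. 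Hence (2) is equivalent to (\ref{condicion}).

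I would then invoke Proposition 1, which yields (1) $\Leftrightarrow$ (\ref{condicion}) as soon as its hypotheses are in force: $\mathbf{M}_0$ is HPD and $\mathcal{D}$ is bounded on $(\mathbb{P}[z],\|\cdot\|_{\mathbf{M}_0})$ by assumption, while the remaining hypothesis $\|zp(z)\|_{\mathbf{M}_1}^{2}\le C_1\|p(z)\|_{\mathbf{M}_1}^{2}$ --- equivalently, that $\mathcal{D}$ be bounded on $(\mathbb{P}[z],\|\cdot\|_{\mathbf{M}_1})$, with $C_1=\|\mathcal{D}\|_{\mathbf{M}_1}^{2}$ --- is part of the running setting (the Problem posed in the Introduction presupposes $\mathcal{D}$ bounded on both $\mathbf{M}_0$ and $\mathbf{M}_1$). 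Combined with the first step this gives (1) $\Leftrightarrow$ (\ref{condicion}) $\Leftrightarrow$ (2), which is the theorem. For the record, the two halves of Proposition 1 are: (\ref{condicion}) $\Rightarrow$ (1), carried out in the proof of Proposition 1; and (1) $\Rightarrow$ (\ref{condicion}), which follows, writing $p(z)=(zp(z))'-zp'(z)$, from
\[
\|p(z)\|_{\mathbf{M}_1}\le\|(zp(z))'\|_{\mathbf{M}_1}+\|zp'(z)\|_{\mathbf{M}_1}\le\|zp(z)\|_{\mathbf{M_S}}+\sqrt{C_1}\,\|p'(z)\|_{\mathbf{M}_1}\le\bigl(\|\mathcal{D}\|_{\mathbf{M_S}}+\sqrt{C_1}\bigr)\|p(z)\|_{\mathbf{M_S}},
\]
since $\|(zp(z))'\|_{\mathbf{M}_1}\le\|zp(z)\|_{\mathbf{M_S}}$ and $\|p'(z)\|_{\mathbf{M}_1}\le\|p(z)\|_{\mathbf{M_S}}$ are components of the respective Sobolev norms.

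Finally I would record the more conceptual argument for (2) $\Rightarrow$ (1), which ties in with the theme of the paper: the pair $\{\mathbf{M}_0+\mathbf{M}_1,\mathbf{M}_1\}$ is sequentially dominated, because $v\mathbf{M}_1v^{*}\le v(\mathbf{M}_0+\mathbf{M}_1)v^{*}$ for every $v\in c_{00}$ ($\mathbf{M}_0$ being HPSD) and $\mathbf{M}_0+\mathbf{M}_1$ is HPD, and moreover $\mathcal{D}$ is bounded on $(\mathbb{P}[z],\|\cdot\|_{\mathbf{M}_0+\mathbf{M}_1})$ since $\|zp(z)\|_{\mathbf{M}_0+\mathbf{M}_1}^{2}=\|zp(z)\|_{\mathbf{M}_0}^{2}+\|zp(z)\|_{\mathbf{M}_1}^{2}\le\max\{\|\mathcal{D}\|_{\mathbf{M}_0}^{2},\|\mathcal{D}\|_{\mathbf{M}_1}^{2}\}\,\|p(z)\|_{\mathbf{M}_0+\mathbf{M}_1}^{2}$. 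Hence by (\cite{EG23}, Theorem~1) --- equivalently, by Proposition 1 applied to $\{\mathbf{M}_0+\mathbf{M}_1,\mathbf{M}_1\}$, for which the analogue of (\ref{condicion}) is automatic --- $\mathcal{D}$ is bounded on $(\mathbb{P}[z],\|\cdot\|_{\mathbf{M_{S'}}})$; and since (2) asserts that this norm is equivalent to $\|\cdot\|_{\mathbf{M_S}}$ on $\mathbb{P}[z]$, boundedness of $\mathcal{D}$ transfers to $(\mathbb{P}[z],\|\cdot\|_{\mathbf{M_S}})$. I expect the only genuinely delicate point to be this bookkeeping of the hypotheses --- in particular keeping the auxiliary bound $\|zp(z)\|_{\mathbf{M}_1}^{2}\le C_1\|p(z)\|_{\mathbf{M}_1}^{2}$ available, since it is precisely what controls the cross term $zp'(z)$ arising from $(zp(z))'=p(z)+zp'(z)$ --- rather than any new estimate: once Proposition 1 is at hand, Theorem 1 is in essence the translation of (\ref{condicion}) into the language of equivalent Sobolev norms, which is how Rodr\'iguez stated it.
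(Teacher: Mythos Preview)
Your argument is correct and follows essentially the same route as the paper: the identity $\|p\|_{\mathbf{M_{S'}}}^{2}=\|p\|_{\mathbf{M_S}}^{2}+\|p\|_{\mathbf{M}_1}^{2}$ reduces (2) to condition (\ref{condicion}), Proposition~1 links (\ref{condicion}) to (1), and the implication (2)$\Rightarrow$(1) is handled via sequential dominance of $\{\mathbf{M}_0+\mathbf{M}_1,\mathbf{M}_1\}$ and \cite{EG23}, exactly as in the paper. Your treatment is in fact more explicit than the paper's on two points: you supply the direction (1)$\Rightarrow$(\ref{condicion}) of Proposition~1 (via $p=(zp)'-zp'$), which the paper states but does not prove, and you correctly flag that the auxiliary hypothesis $\|zp\|_{\mathbf{M}_1}^{2}\le C_1\|p\|_{\mathbf{M}_1}^{2}$ is being used implicitly (both here and in the paper's own proof) even though it is not listed among the hypotheses of the theorem.
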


\begin{proof} Assume $\mathcal{D}$ is bounded on $(\mathbb{P}[z], \Vert \cdot \Vert_{\mathbf{M_S}})$.  First, since $\mathbf{M}_0 \leq \mathbf{M_0}+\mathbf{M}_1$ it holds 
$$
\Vert p(z) \Vert^{2}_{\mathbf{M_S'}}= \Vert p(z) \Vert^{2}_{\mathbf{M}_0+\mathbf{M}_1}+ \Vert p'(z) \Vert^{2}_{\mathbf{M}_1} \geq 
\Vert p(z) \Vert^{2}_{\mathbf{M}_0}+ \Vert p'(z) \Vert^{2}_{\mathbf{M}_1} = \Vert p(z) \Vert^{2}_{\mathbf{M_{S}}}.
$$
\noindent On the other hand, 
$$
\Vert p(z) \Vert^{2}_{\mathbf{M_{S'}}}= \Vert p(z)\Vert^{2}_{{\mathbf{M_0}+\mathbf{M}_1}}+ \Vert p'(z) \Vert^{2}_{\mathbf{M_1}} = \Vert p(z) \Vert^{2}_{\mathbf{M_0}}+\Vert p(z) \Vert^{2}_{\mathbf{M_1}}+ \Vert p'(z) \Vert^{2}_{\mathbf{M_1}}, 
$$

\noindent by Proposition 1 it follows (\ref{condicion})  and, 
$$
\Vert p(z) \Vert^{2}_{\mathbf{M_{S'}}}\leq    C^2\Vert p(z) \Vert^{2}_{\mathbf{M_S}}+ \Vert p(z) \Vert^{2}_{\mathbf{M}_0} +  \Vert p'(z) \Vert^{2}_{\mathbf{M_1}}
 \leq (C^2+1)\Vert p(z) \Vert^{2}_{\mathbf{M_S}}.
$$

\noindent Therefore, 
$$
\Vert p(z) \Vert^{2}_{\mathbf{S}}
\leq \Vert p(z) \Vert^{2}_{\mathbf{S'}}
\leq (C^2+1)\Vert p(z) \Vert^{2}_{\mathbf{M_S}}, \qquad p(z) \in \mathbb{P}[z],
$$

\noindent and the norms  $\Vert p(z) \Vert^{2}_{\mathbf{M_{S}}},  \Vert p(z) \Vert^{2}_{\mathbf{M_{S'}}}$ are equivalent on $\mathbb{P}[z]$. 

\noindent In order to prove $(2)$ implies $(1)$, since $\{\mathbf{M}_0+\mathbf{M}_1, \mathbf{M}_1\}$ is sequentiality dominated by \cite{EG23} it follows that $\mathcal{D}$ is bounded on $(\mathbb{P}[z], \Vert \cdot \Vert_{\mathbf{S'}})$.  
Since an operator which is bounded with respect to a norm is bounded with respect to any equivalent norm, we have the required conclusion.

\end{proof}

\noindent  Note that for vectorial measures $(\mu_0,\mu_1)$, when we particularize Proposition 1 for the set of associated moment matrices $\{\mathbf{M}(\mu_0), \mathbf{M}(\mu_1)\}$, we obtain the following result which is the characterization of boundedness of $\mathcal{D}$ in \cite{JR1}, \cite{JR3}. 

\begin{corollary} Let $\mu=(\mu_0,\mu_1)$  be a vectorial measure and  the Sobolev inner product (\ref{dosmedidas}). Then, the following are equivalent: 

\begin{enumerate} 
\item The multiplication operator  is bounded with respect to $\Vert \cdot \Vert_{\mathbf{S}}$, 
\item There  exists a constant $C>0$ such that,
$$
\int \vert p(z) \vert^{2} d\mu_1\leq C (\int \vert p(z) \vert^{2}d\mu_0 + \int \vert p'(z) \vert^{2} d\mu_1) \qquad p(z) \in \mathbb{P}[z],
$$
\item $\mathcal{D}$
is bounded with respecto to the Sobolev norm induced by the vectorial measure $(\mu_0+\mu_1,\mu_1)$.
\end{enumerate}
\end{corollary}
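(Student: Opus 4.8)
The plan is to obtain Corollary 1 simply by specializing Proposition 1 and Theorem 1 to the pair of moment matrices $\mathbf{M}_0=\mathbf{M}(\mu_0)$, $\mathbf{M}_1=\mathbf{M}(\mu_1)$. So the first step is to record the three elementary facts that make this specialization legitimate. (i) $\mathbf{M}(\mu_1)$ is HPSD and $\mathbf{M}(\mu_0)$ is genuinely HPD: this is exactly where the standing assumption that $\mu_0$ be infinitely supported enters, since if a nonzero polynomial vanished $\mu_0$-a.e.\ then $\mu_0$ would be concentrated on its finite zero set, contradicting infinite support; hence the monomials $1,z,z^2,\dots$ are linearly independent in $L^2(\mu_0)$ and their Gram matrix $\mathbf{M}(\mu_0)$ is positive definite, not merely semidefinite. (ii) For $i=0,1$ one has $\Vert p(z)\Vert^2_{\mathbf{M}(\mu_i)}=v\mathbf{M}(\mu_i)v^{*}=\int|p(z)|^2\,d\mu_i$, where $v$ is the coefficient vector of $p$; consequently the matrix Sobolev norm of $\{\mathbf{M}(\mu_0),\mathbf{M}(\mu_1)\}$ is exactly the Sobolev norm $\Vert\cdot\Vert_{\mathbf{S}}$ of $(\mu_0,\mu_1)$. (iii) Moments are additive in the measure, so $\mathbf{M}(\mu_0)+\mathbf{M}(\mu_1)=\mathbf{M}(\mu_0+\mu_1)$, which identifies the auxiliary norm $\Vert\cdot\Vert_{\mathbf{M_{S'}}}$ of Theorem 1 with the Sobolev norm of the vectorial measure $(\mu_0+\mu_1,\mu_1)$.

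The second step is to check the two boundedness hypotheses needed by Proposition 1 and Theorem 1; both are immediate from compact support. Setting $R_i=\sup\{|z|:z\in\sop \mu_i\}<\infty$, we get $\int|zp(z)|^2\,d\mu_i\le R_i^{2}\int|p(z)|^2\,d\mu_i$ for all $p\in\mathbb{P}[z]$ and $i=0,1$. The case $i=0$ says precisely that $\mathcal{D}$ is bounded on $(\mathbb{P}[z],\Vert\cdot\Vert_{\mathbf{M}(\mu_0)})$ with norm at most $R_0$; the case $i=1$ is exactly the extra hypothesis $\Vert zp(z)\Vert^2_{\mathbf{M}_1}\le C_1\Vert p(z)\Vert^2_{\mathbf{M}_1}$ of Proposition 1, with $C_1=R_1^{2}$ (the degenerate case $\mu_1=0$ being trivial). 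Hence both Proposition 1 and Theorem 1 apply to $\{\mathbf{M}(\mu_0),\mathbf{M}(\mu_1)\}$.

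The last step is to read off the conclusions. Proposition 1 gives the equivalence of (1) and (2): its inequality $\Vert p(z)\Vert_{\mathbf{M}_1}\le C\Vert p(z)\Vert_{\mathbf{M_S}}$, after squaring and rewriting the matrix norms via fact (ii), is literally $\int|p(z)|^2\,d\mu_1\le C^{2}(\int|p(z)|^2\,d\mu_0+\int|p'(z)|^2\,d\mu_1)$, i.e.\ condition (2) up to renaming the constant. Theorem 1 gives that (1) is equivalent to $\Vert\cdot\Vert_{\mathbf{S}}$ being equivalent on $\mathbb{P}[z]$ to the matrix Sobolev norm of $\{\mathbf{M}(\mu_0)+\mathbf{M}(\mu_1),\mathbf{M}(\mu_1)\}$, which by fact (iii) is the Sobolev norm of $(\mu_0+\mu_1,\mu_1)$; and since $\mathbf{M}(\mu_1)\le\mathbf{M}(\mu_0+\mu_1)$ (because $\mathbf{M}(\mu_0)$ is HPSD), the pair $\{\mathbf{M}(\mu_0+\mu_1),\mathbf{M}(\mu_1)\}$ is sequentially dominated with constant $1$, so $\mathcal{D}$ is automatically bounded for that Sobolev norm by \cite{EG23}, and boundedness of an operator transfers between equivalent norms — this yields (3). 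The proof is essentially bookkeeping; there is no genuine obstacle, the only two points deserving a word being the positive-definiteness of $\mathbf{M}(\mu_0)$ (which uses that $\mu_0$ is infinitely supported) and the finiteness of $R_0,R_1$ (which uses that $\mu_0,\mu_1$ are compactly supported), both built into the standing hypotheses on $(\mu_0,\mu_1)$.
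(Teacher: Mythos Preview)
Your proposal is correct and follows exactly the approach the paper itself indicates: the paper gives no separate proof of Corollary 1 but simply says it is obtained by particularizing Proposition 1 (and, implicitly, Theorem 1) to the pair of moment matrices $\{\mathbf{M}(\mu_0),\mathbf{M}(\mu_1)\}$, which is precisely what you do. You spell out in more detail than the paper why the hypotheses of those results are met (HPD-ness of $\mathbf{M}(\mu_0)$ from infinite support, boundedness of $\mathcal{D}$ on each $\Vert\cdot\Vert_{\mathbf{M}(\mu_i)}$ from compact support, and $\mathbf{M}(\mu_0)+\mathbf{M}(\mu_1)=\mathbf{M}(\mu_0+\mu_1)$), but the underlying argument is identical.
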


\section{Bounded point evaluations and zeros of Sobolev orthogonal polynomials} 

%\noindent We here always consider Borel measures supported on any set the complex plane (compact or not) and verifying that there exists for $i,j\geq 0$, 

%\noindent Associated with $\mu$ we consider the associated moment matrix which is an Hermitian positive semidefinite matrix denoted by $\mathbf{M}(\mu)=(c_{i,j))_{i,j=0}^{\infty}$. 

\noindent Recall that $a\in \CC$ is a bounded point evaluation of a measure $\mu$, in short a bpe of $\mu$,  if there exists $C>0$ such that $$ \vert p (a) \vert 2 \leq C \int \vert p(z) \vert^2 \; d\mu, \qquad p(z)\in \mathbb{P}[z].
$$

\noindent We denote by $bpe(\mu)= \{a\in \mathbb{C}: a \text{ is a bpe of  }  \mu\}$. It is known  (see e.g.   \cite{Conway}) that 
for a compact set  $K\subset \mathbb{C}$  the {\it polynomially convex 
hull } of $K$, denoted by $P_C{(K)}$, is defined as
$$
P_C(K):=\{ z\in \CC:  \; \vert p(z) \vert \leq \max_{\xi \in K}\vert p(\xi) \vert,  \; \text{for all } p(z)\in \mathbb{P}[z] \}.
$$

\noindent A compact set is said to be {\it polynomially convex} if $K=P_C(K)$. Obviously, $K \subset P_C(K)$.  Using similar techniques as in \cite{EGT6}, we here prove that the bounded point evaluations of a measure are contained in the polynomially convex hall of its support.

\begin{lem} Every bounded point evaluation of a measure $\mu$ is contained in the polynomially convex hall of $\mu$. 
\end{lem}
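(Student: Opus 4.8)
The claim is that $bpe(\mu) \subseteq P_C(\supp \mu)$. The natural strategy is to prove the contrapositive: if $a \notin P_C(\supp \mu)$, then $a$ is not a bounded point evaluation of $\mu$, i.e. there is no constant $C>0$ with $|p(a)|^2 \le C \int |p(z)|^2 \, d\mu$ for all $p \in \mathbb{P}[z]$. First I would unwind the definition of the polynomially convex hull: $a \notin P_C(\supp\mu)$ means there exists a polynomial $q(z) \in \mathbb{P}[z]$ with $|q(a)| > \max_{\xi \in \supp\mu} |q(\xi)|$. By normalizing (dividing $q$ by $\max_{\xi \in \supp\mu}|q(\xi)|$, which is positive since $\supp\mu$ is infinite hence nonempty), I may assume $\sup_{\xi \in \supp\mu}|q(\xi)| \le 1$ while $|q(a)| = \lambda > 1$.

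**Key step.** Now consider the sequence of polynomials $p_n(z) = q(z)^n$. On one hand $|p_n(a)|^2 = \lambda^{2n} \to \infty$. On the other hand,
\[
\int |p_n(z)|^2 \, d\mu = \int |q(z)|^{2n} \, d\mu \le \mu(\CC) < \infty,
\]
since $|q(z)| \le 1$ on $\supp\mu$ and $\mu$ is a finite measure. Hence the ratio $|p_n(a)|^2 \big/ \int |p_n(z)|^2 \, d\mu$ is bounded below by $\lambda^{2n}/\mu(\CC) \to \infty$, so no single constant $C$ can work. Therefore $a \notin bpe(\mu)$, completing the contrapositive. One small point to handle carefully: the inequality $|q(z)| \le 1$ only holds $\mu$-a.e. on the support, but since $\supp\mu$ is by definition the smallest closed set of full measure and $|q|$ is continuous, $|q(z)| \le \sup_{\xi \in \supp\mu}|q(\xi)| \le 1$ holds on all of $\supp\mu$, hence $\mu$-a.e., which is all the integral estimate needs.

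**Main obstacle.** The argument is essentially soft and I do not anticipate a serious technical obstacle; the only delicate point is the correct handling of the normalization when $\supp\mu$ is compact but one must be sure $\max_{\xi\in\supp\mu}|q(\xi)|$ is attained and positive — attainment follows from compactness of $\supp\mu$ and continuity of $|q|$, and positivity follows because if $q$ vanished identically on the infinite set $\supp\mu$ it would be the zero polynomial, contradicting $|q(a)| > 0$. It is worth noting the parallel with the standard fact that the zeros of orthogonal polynomials lie in the convex hull of the support; here the polynomially convex hull appears because we test against powers $q^n$ rather than against a fixed family, and this is exactly the "similar techniques as in \cite{EGT6}" the authors allude to. If one wanted the matrix-level generalization (bpe of an HPD matrix), one would replace $\int |p(z)|^2\,d\mu$ by $\langle p,p\rangle_{\mathbf{M}}$ and the finiteness of $\mu(\CC)$ by boundedness of the relevant matrix action on the constant-like sequences, but for the statement as given the measure-theoretic version above suffices.
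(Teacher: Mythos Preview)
Your proof is correct and follows essentially the same route as the paper: assume $a\notin P_C(\operatorname{supp}\mu)$, pick a polynomial separating $a$ from the support, and take powers to violate any putative bpe bound. The paper normalizes so that $p(a)=1$ and $|p|\le\alpha<1$ on $P_C(\operatorname{supp}\mu)$ (citing \cite{Forstneric} for the separating polynomial, though as you observe this comes straight from the definition of $P_C$), whereas you normalize so that $|q|\le 1$ on the support and $|q(a)|=\lambda>1$; these are equivalent bookkeeping choices and the contradiction via $q^n$ is identical.
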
 
\begin{proof} Assume that $a$ is a bpe for $\mu$, that is, there exists $C>0$ such that
$$
\vert p(a) \vert^2 \leq C\int \vert p(z) \vert^2 \; d\mu \qquad p(z)\in \mathbb{P}[z] .
$$

\noindent Then $a\in P_C({\it supp}(\mu))$. Assume to the contrary, then by \cite{Forstneric} there exists $p(z)\in \mathbb{P}[z]$ such that $p(a)=1$ and $\vert p(z)\vert<1$
for all $z\in P_C({\it supp}(\mu))$. Since $P_C({\it supp}(\mu))$ is a compact set there exists $0<\alpha <1$ such that $\vert p(z) \vert \leq \alpha$ for every $z\in P_C({\it supp}(\mu))$. By taking  $q_n(z)=p(z)^n$  it follows 
$$
1 \leq C\int \vert p(z) \vert^{2n} \; d\mu \leq \alpha^{2n}\;\mu({\it supp}(\mu)),
$$
\noindent which is  not possible. 
\end{proof} 

\noindent In \cite{EGT5} the following notion of bpe of an HPD is introduced: 
\begin{definition} Let $\mathbf{M}$ be an HPD matrix and  $a\in \CC$, we say that $a$ is a bounded point evaluation (in short a bpe)   of $\mathbf{M}$ if there exists  $C>0$ such that 

$$
\vert p(a) \vert^{2} \leq C \Vert p(z) \Vert^{2}_{\mathbf{M}}, \qquad p(z) \in \mathbb{P}[z].
$$
\end{definition}

\begin{remark} Note that a bounded point evaluation of a measure $\mu$ is,   indeed, a bounded point evaluation of the associated moment matrix  $\mathbf{M}(\mu)$. 
\end{remark}

\begin{remark} When we consider the probability Dirac measure $\delta_{a}$ concentrated at a point $a\in \CC$ and  its associated moment matrix $\mathbf{M(a)}=\mathbf{M}(\delta_{a})$, the fact of being a bpe of  $\mathbf{M}(a)$ turns to be that 
$\{\mathbf{M}, \mathbf{M(a)}\}$ is a set of sequentiality dominated HPSD matrices. 
\end{remark}

\noindent Using the matrix approach for  bounded point evaluations, this notion is related with a pointwise matrix index $\gamma_{a}$ introduced in \cite{EGT5} as follows: let $\mathbf{M}$ be an infinite HPSD matrix and $a\in \CC$, 
$$
\gamma_{a}(\mathbf{M}):=\inf \{ v\mathbf{M}v^{*}: \sum_{k=0}^{\infty} v_ka^{k}=1, v\in c_{00}\}. 
$$
\noindent In \cite{EGT5} it is proved that $a$ is a bpe of an HPD matrix $\mathbf{M}$ if and only if $\gamma_{a}(\mathbf{M}>0$. 
 In the following proposition, we summarize these results for bpe's of a matrix:

\begin{prop} Let $\mathbf{M}$ be an HPD matrix. Then, the following are equivalent:

\begin{enumerate}
\item $a$ is a bounded point evaluation of $\mathbf{M}$.

%\item $\displaystyle{
%\dfrac{1}{\gamma_{a}(\mathbf{M})}=\lim_{n \to \infty} (1,a,a^2,\dots, a^n)\mathbf{M}_{
%n}^{-1}\left(
%\begin{array}{c}
%1 \\
%a\\
%\vdots \\
%a^{n} \\
%\end{array}
%\right) >0}$.
%\item The evaluation operator $L_{a}: (\mathbb{P}[z], \Vert \cdot \Vert_{\mathbf{M}})\to \CC$ given by $L(p(z))=p(a)$ is continuous with norm $\dfrac{1}{(\gamma_{a}(\mathbf{M}))^{1/2}}$

\smallskip 

\item The set $\{\mathbf{M},\mathbf{M}(a)\}$ is sequentially dominated. 
\item $\gamma_{a}(\mathbf{M})>0$.
\end{enumerate}
\end{prop}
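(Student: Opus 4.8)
The plan is to establish the chain of equivalences $(1)\Leftrightarrow(3)$ and $(2)\Leftrightarrow(3)$, most of which is essentially a matter of unwinding definitions once the right reformulation of $\gamma_a(\mathbf{M})$ is in place. First I would prove $(1)\Leftrightarrow(3)$. The key observation is that the defining inequality for a bpe, $|p(a)|^2\le C\Vert p(z)\Vert_{\mathbf{M}}^2$, can be tested only on polynomials $p$ with $p(a)=1$ (scaling), and on such $p$ it reads $1\le C\,v\mathbf{M}v^*$ where $v$ is the coefficient vector of $p$ subject to $\sum_k v_k a^k=1$. Taking the infimum over such $v$, the inequality holds for some $C>0$ if and only if $\inf\{v\mathbf{M}v^*:\sum_k v_k a^k=1\}>0$, i.e. $\gamma_a(\mathbf{M})>0$; conversely if $\gamma_a(\mathbf{M})=\gamma>0$ one takes $C=1/\gamma$. (For a polynomial with $p(a)=0$ the bpe inequality is trivially satisfied, so nothing is lost by the normalization.) This is exactly the statement attributed to \cite{EGT5}, so I would either cite it directly or reproduce this two-line argument.

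Next I would prove $(1)\Leftrightarrow(2)$. By Remark 3 (or directly), the moment matrix of the Dirac measure $\delta_a$ satisfies $v\mathbf{M}(a)v^* = |p(a)|^2$ for $v$ the coefficient vector of $p$, since $\mathbf{M}(a)=(a^i\overline{a}^j)_{i,j}$ is the rank-one matrix $\xi\xi^*$ with $\xi=(1,a,a^2,\dots)$ and $v\xi\xi^*v^* = |v\xi|^2 = |p(a)|^2$. Therefore the condition $\Vert p(z)\Vert_{\mathbf{M}(a)}^2 = |p(a)|^2 \le C\Vert p(z)\Vert_{\mathbf{M}}^2$ for all $p$ is literally the statement that $\{\mathbf{M},\mathbf{M}(a)\}$ is sequentially dominated (Definition of sequentially dominated: $\mathbf{M}(a)\le C\mathbf{M}$), and this is identical to the defining condition in Definition 4 for $a$ to be a bpe of $\mathbf{M}$. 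So $(1)$ and $(2)$ are just two phrasings of the same inequality, and combining with the previous paragraph closes the cycle.

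The main point requiring a little care — and the place I would expect the only genuine obstacle — is the direction of $(1)\Leftrightarrow(3)$ asserting that the infimum in $\gamma_a$ is actually \emph{attained} or at least bounded away from zero precisely when the bpe inequality holds; the subtlety is that $c_{00}$ is not closed and $\mathbf{M}$ is only HPSD in the ambient setting, so one cannot invoke compactness naively. However, for the equivalence as stated one does not need attainment: $\gamma_a(\mathbf{M})>0$ means the infimum (a number in $[0,\infty)$) is strictly positive, and the normalization/scaling argument above converts this directly into the existence of the constant $C$ without any closure or attainment issue. Since $\mathbf{M}$ is assumed HPD here (not merely HPSD), $v\mathbf{M}v^*>0$ for every nonzero $v$, which guarantees $\gamma_a(\mathbf{M})\ge 0$ is well-defined and the degenerate case is only $\gamma_a(\mathbf{M})=0$, handled by the contrapositive: if $\gamma_a(\mathbf{M})=0$ there is a sequence $v^{(n)}$ with $\sum_k v^{(n)}_k a^k=1$ and $v^{(n)}\mathbf{M}(v^{(n)})^*\to 0$, which contradicts any bpe inequality. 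I would finish by noting that all three implications have now been verified, citing \cite{EGT5} for the portions already established there.
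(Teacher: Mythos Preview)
Your proposal is correct. In the paper this proposition is presented as a summary without proof: the equivalence $(1)\Leftrightarrow(3)$ is simply quoted from \cite{EGT5}, and the equivalence $(1)\Leftrightarrow(2)$ is exactly the content of Remark~3 (that $a$ being a bpe of $\mathbf{M}$ amounts to $\mathbf{M}(a)\le C\mathbf{M}$). Your write-up supplies precisely the arguments behind these two citations---the normalization $p(a)=1$ for $(1)\Leftrightarrow(3)$ and the rank-one computation $v\mathbf{M}(a)v^{*}=|p(a)|^{2}$ for $(1)\Leftrightarrow(2)$---so it matches the paper's intended approach and simply makes it explicit.
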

%\begin{proof} We prove that 
%$$$
%\Vert \delta_{a} \Vert^{2} = \dfrac{1}{\gamma_{z_0}(\mathbf{M})}
%$$

%$$
%\Vert \delta_{a} \Vert^{2} = \sup \{ \vert p(a) \vert^{2} : p\in \mathbb{P}[z], \Vert p(z) \Vert^{2}_{\mathbf{M}}=1\}=  \sup \{ \dfrac{\vert p(a)\vert^{2}}{\Vert p(z) \Vert_{\mathbf{M}}^{2}}  : 0\neq p\in \mathbb{P}[z]\}
%
 
%
\bigskip

\noindent In the sequel we consider discrete matrix Sobolev norms   of the type
\begin{equation}\label{sobPoint}
    \Vert p(z)\Vert^{2}_{\mathbf{M}_{\mathbf{S}}}:=\Vert p(z)\Vert^{2}_{\mathbf{M}}+ \sum_{k=1}^{n} \vert p'(a_k) \vert^{2},
\end{equation}

\noindent where $\mathbf{M}$ is an HPD matrix and $a_1,\dots,a_n\in \CC$. This notion is a generalization of discrete Sobolev inner products, this is, associated with $\mu$  infinitely supported  in the complex plane and 
$a_1,\dots ,a_n\in \CC $, 
\begin{equation} \label{discrete}
\Vert p(z) \Vert^2_{\mathbf{S}}= \int \vert p(z) \vert^{2} d\mu + \sum_{k=1}^{n} \vert p'(a_k) \vert^{2}.
\end{equation}
\noindent

\begin{prop} Let $\mathbf{M}$ be an HPD matrix such that  $\mathcal{D}$ is bounded on $(\mathbb{P}[z], \Vert \cdot \Vert_{\mathbf{M}})$ and let $a_1,\dots ,a_n\in \CC $ be bounded point evaluations of $\mathbf{M}$.  Then  the multiplication operator $\mathcal{D}$ is bounded on $\mathbb{P}[z]$ with respect to the   inner norm (\ref{sobPoint}).
%$$
%\Vert p(z)\Vert^{2}_{\mathbf{M_S}}:=\Vert p(z)\Vert^{2}_{\mathbf{M}}+ \sum_{k=1}^{n} \vert p'(a_k) \vert^{2}.
%$$

%\noindent Consequently the set of zeros of associated Sobolev orthogonal polynomials is bounded. 
\end{prop}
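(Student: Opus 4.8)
The plan is to reduce the statement to Proposition 1, which tells us that $\mathcal{D}$ is bounded on $(\mathbb{P}[z],\Vert\cdot\Vert_{\mathbf{M_S}})$ as soon as $\mathcal{D}$ is bounded on $(\mathbb{P}[z],\Vert\cdot\Vert_{\mathbf{M}})$, the multiplication-type inequality $\Vert zp(z)\Vert_{\mathbf{M}_1}^2\le C_1\Vert p(z)\Vert_{\mathbf{M}_1}^2$ holds, and the comparability condition \eqref{condicion} holds. Here the second matrix is $\mathbf{M}_1=\mathbf{M}(a_1)+\cdots+\mathbf{M}(a_n)$, which represents the finite-dimensional seminorm $\Vert p(z)\Vert_{\mathbf{M}_1}^2=\sum_{k=1}^n|p(a_k)|^2$. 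So I would first check the two hypotheses of Proposition 1 for this choice of $\mathbf{M}_1$, and then verify \eqref{condicion}.

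First I would dispatch the $\mathcal{D}$-hypothesis on $\mathbf{M}_1$: for the discrete seminorm, $\Vert zp(z)\Vert_{\mathbf{M}_1}^2=\sum_{k=1}^n|a_k p(a_k)|^2\le \big(\max_k|a_k|^2\big)\sum_{k=1}^n|p(a_k)|^2=C_1\Vert p(z)\Vert_{\mathbf{M}_1}^2$, so $C_1=\max_k|a_k|^2$ works. The boundedness of $\mathcal{D}$ on $(\mathbb{P}[z],\Vert\cdot\Vert_{\mathbf{M}})$ is assumed. So the only real content is the comparability inequality \eqref{condicion}: there must exist $C>0$ with
$$
\Vert p(z)\Vert_{\mathbf{M}_1}^2=\sum_{k=1}^n|p(a_k)|^2\le C\Big(\Vert p(z)\Vert_{\mathbf{M}}^2+\sum_{k=1}^n|p'(a_k)|^2\Big)=C\Vert p(z)\Vert_{\mathbf{M_S}}^2.
$$
Here I would use that each $a_k$ is a bounded point evaluation of $\mathbf{M}$, which means $|p(a_k)|^2\le C_k\Vert p(z)\Vert_{\mathbf{M}}^2$ for some $C_k>0$. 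Summing over $k$ gives $\sum_{k=1}^n|p(a_k)|^2\le\big(\sum_{k=1}^n C_k\big)\Vert p(z)\Vert_{\mathbf{M}}^2\le\big(\sum_{k=1}^n C_k\big)\Vert p(z)\Vert_{\mathbf{M_S}}^2$, since $\Vert p(z)\Vert_{\mathbf{M}}^2\le\Vert p(z)\Vert_{\mathbf{M_S}}^2$. This is exactly \eqref{condicion} with $C=\sum_{k=1}^n C_k$.

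Having verified all three hypotheses of Proposition 1, I would conclude directly that $\mathcal{D}$ is bounded on $(\mathbb{P}[z],\Vert\cdot\Vert_{\mathbf{M_S}})$, which is the assertion of the proposition. I do not expect any genuine obstacle here: the role of the bpe hypothesis is precisely to furnish the pointwise estimates $|p(a_k)|^2\lesssim\Vert p\Vert_{\mathbf{M}}^2$ that feed the comparability condition, and the discrete seminorm automatically satisfies the auxiliary multiplication bound because it is built from finitely many point evaluations. The only mild care needed is the bookkeeping of constants and the observation that $\mathbf{M}_1$ as defined is HPSD (a sum of the rank-one moment matrices $\mathbf{M}(a_k)$), so that Proposition 1, stated for HPSD $\mathbf{M}_1$ with HPD $\mathbf{M}_0=\mathbf{M}$, applies verbatim.
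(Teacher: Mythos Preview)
Your proposal is correct and follows essentially the same route as the paper: identify $\mathbf{M}_1=\sum_{k=1}^n\mathbf{M}(a_k)$, use the bpe hypothesis to get $\Vert p\Vert_{\mathbf{M}_1}^2\le\big(\sum_k C_k\big)\Vert p\Vert_{\mathbf{M}}^2$, and then invoke Proposition~1. The paper phrases the key estimate as ``$\{\mathbf{M},\sum_k\mathbf{M}(a_k)\}$ is sequentially dominated'' and jumps straight to Proposition~1, whereas you additionally spell out the auxiliary hypothesis $\Vert zp\Vert_{\mathbf{M}_1}^2\le(\max_k|a_k|^2)\Vert p\Vert_{\mathbf{M}_1}^2$ that Proposition~1 formally requires; this extra care is warranted and the argument is otherwise identical.
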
 
\begin{proof} Since $a_k$ is a bounded point evaluation of $\mathbf{M}$, for every $k=1,\dots,n$,  then there exist  $C_k>0$ such that 
$$
<p(z),p(z)>_{\mathbf{M(a_k)}} \leq C_k <p(z),p(z)>_{\mathbf{M}}, \quad p(z)\in \mathbb{P}[z], \quad k=1,\dots,n.
$$

\noindent Then, the set $\{\mathbf{M},\sum_{k=1}^{n} \mathbf{M(a_k)}\}$ is sequentiality dominated, indeed, if $C=\sum_{k=1}^{n} C_k$
$$
<p(z),p(z)>_{\sum_{k=1}^n \mathbf{M}(a_k)} = \sum_{k=1}^{n}<p(z),p(z)>_{\mathbf{M(a_k)}} \leq C <p(z),p(z)>_{\mathbf{M}}.
$$

\noindent Since $\Vert \cdot \Vert_{\mathbf{M_S}}$ is the inner product associated with $\{\mathbf{M},\sum_{k=1}^{n} \mathbf{M(a_k)}\}$ the result is a consequence of Proposition 1. 
\end{proof}
\noindent 

%\noindent la otra prueba 
%\begin{proof} 
%$$
%\Vert zp(z)\Vert^{2}_{\mathbf{S}}= \Vert zp(z)\Vert^{2}_{\mathbf{M}}+\sum_{k=1}^{n} \vert p(a_k)+a_p'(a_k) \vert^{2} \leq
%$$
%$$
%c\Vert p(z)\Vert^{2}_{\mathbf{M}}+2
%\sum_{k=1}^{n} \vert p(a_k) \vert^{2}+2
%\sum_{k=1}^{n} a_k\vert p'(a_k) \vert^{2}
%$$

%\noindent Thus, by taking  $r=\max \{ \vert a_k \vert^{2}, 1\leq k\leq n \}\geq 0$ 
%$$
%\leq (c+2nr) \Vert p(z)\Vert^{2}_{\mathbf{S}}+\sum_{k=1}^{n} \vert a_k \vert^{2} \vert p(a_k) \vert^{2}$$

%\noindent Since $a_1,\dots ,a_n\in \CC $ are bounded point evaluations of $\mathbf{M}$, there exists a constant $C>0$ such that 
%$$
%\vert p(a_k) \vert^{2}  \leq C\Vert p(z)\Vert^{2}_{\mathbf{M}}
%$$ 

%\noindent Therefore, $\mathcal{D}$ is bounded with respect to $\Vert \cdot \Vert_{\mathbf{S}}$ as we required. Since boundedness of $\mathcal{D}$ is a sufficient condition to boundedness of the set of zeros of orthogonal polynomials (see ??) we obtain the result. \end{proof}

\noindent In the sequel, we particularize the  above result for the case of  discrete Sobolev measures as in (\ref{discrete}).

\begin{corollary} Let $\mu$ be a measure with infinite support and let 
$a_1,\dots ,a_n\in bpe(\mu)$.  Then $\mathcal{D}$ is bounded with respect to the discrete Sobolev norm (\ref{discrete}). Consequently, the set of zeros of associated Sobolev orthogonal polynomials is bounded.
\end{corollary}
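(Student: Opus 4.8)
The plan is to deduce Corollary 2 directly from Proposition 3 by specializing the general matrix statement to the moment matrix of the measure $\mu$. First I would set $\mathbf{M}=\mathbf{M}(\mu)$, the moment matrix of $\mu$; since $\mu$ is compactly supported, $\mathcal{D}$ is bounded on $(\mathbb{P}[z],\Vert\cdot\Vert_{\mathbf{M}(\mu)})$ — this is the standard fact (recalled in the introduction, cf.\ \cite{Castro-Duran}, \cite{EGT6}) that boundedness of the support of a measure implies boundedness of the multiplication operator with respect to the $L^2(\mu)$ norm, which coincides with $\Vert\cdot\Vert_{\mathbf{M}(\mu)}$ on $\mathbb{P}[z]$. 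Next, by Remark 3, each $a_k\in bpe(\mu)$ is a bounded point evaluation of the associated moment matrix $\mathbf{M}(\mu)$, so the hypotheses of Proposition 3 are met with $\mathbf{M}=\mathbf{M}(\mu)$ and the points $a_1,\dots,a_n$.

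Then I would invoke Proposition 3 to conclude that $\mathcal{D}$ is bounded on $(\mathbb{P}[z],\Vert\cdot\Vert_{\mathbf{M_S}})$, where $\Vert\cdot\Vert_{\mathbf{M_S}}$ is the discrete matrix Sobolev norm $\Vert p(z)\Vert^2_{\mathbf{M}(\mu)}+\sum_{k=1}^n|p'(a_k)|^2$. The remaining point is to observe that this matrix Sobolev norm is exactly the discrete Sobolev norm (\ref{discrete}): indeed $\Vert p(z)\Vert^2_{\mathbf{M}(\mu)}=\int |p(z)|^2\,d\mu$ for every $p\in\mathbb{P}[z]$, so the two expressions agree term by term. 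Hence $\mathcal{D}$ is bounded with respect to (\ref{discrete}).

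Finally, for the last sentence I would appeal to the result quoted in the introduction — the theorem of G.\ López and H.\ Pijeira \cite{LP}, in the matrix form of \cite{EGT6} — that boundedness of $\mathcal{D}$ with respect to a (matrix) Sobolev norm implies that the zeros of the associated Sobolev orthogonal polynomials are uniformly bounded. Applying this to the norm (\ref{discrete}) gives the stated conclusion about the location of zeros.

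I do not anticipate a genuine obstacle here: the corollary is a straightforward specialization, and the only things to be careful about are the bookkeeping identifications (the $L^2(\mu)$ norm equals the moment-matrix norm on polynomials; a bpe of $\mu$ is a bpe of $\mathbf{M}(\mu)$; the discrete Sobolev norm is a discrete matrix Sobolev norm with $\mathbf{M}=\mathbf{M}(\mu)$) together with citing the right prior result for the zeros. If anything, the mildly delicate step is making sure the hypothesis ``$\mathcal{D}$ bounded on $(\mathbb{P}[z],\Vert\cdot\Vert_{\mathbf{M}(\mu)})$'' in Proposition 3 is justified, which is why I would state explicitly that compact support of $\mu$ guarantees it.
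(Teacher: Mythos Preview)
Your proposal is correct and matches the paper's approach exactly: the paper states Corollary 2 as the particularization of Proposition 3 to $\mathbf{M}=\mathbf{M}(\mu)$ (with no separate proof given), and your bookkeeping identifications---compact support of $\mu$ yielding boundedness of $\mathcal{D}$ on $(\mathbb{P}[z],\Vert\cdot\Vert_{\mathbf{M}(\mu)})$, Remark 3 for $bpe(\mu)\subset bpe(\mathbf{M}(\mu))$, and the equality of the discrete Sobolev norm with the discrete matrix Sobolev norm---are precisely the ingredients needed, together with the cited result of \cite{LP}, \cite{EGT6} for the conclusion about zeros.
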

\noindent  More interesting and precise results are obtained in the particular case of measures supported on Jordan curves. Indeed, by using the results in \cite{EGT5} concerning the localization of bpe's of a measure in terms of denseness of polynomials, we have:
%the following result: 

\begin{corollary} Let $\nu_{\Gamma}$ be a measure supported on a  closed  Jordan curve $\Gamma$ verifying  that polynomials are not dense in %$P^{2}(\nu_{\Gamma})\neq 
$L^{2}(\nu_{\Gamma})$. Assume that  $a_1,\dots,a_{n} \in int(\Gamma)$, and consider  the associated  discrete Sobolev norm
$$
\Vert p(z) \Vert^2_{\mathbf{S}}= \int \vert p(z) \vert^{2}d\nu_{\Gamma} + \sum_{k=1}^{n} \vert p'(a_k)\vert^{2}.
$$

\noindent Then, the  $\mathcal{D}$  is bounded on $(\mathbf{P}[z],\Vert \cdot \Vert_{\mathbf{S}})$. Consequently, the set of zeros of associated Sobolev orthogonal polynomials is bounded.
%the set of zeros of Sobolev orthogonal polynomial  is bounded.

\end{corollary}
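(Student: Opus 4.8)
The plan is to recognize this statement as an immediate instance of Corollary 4 (equivalently, of Proposition 3 applied to the moment matrix $\mathbf{M}(\nu_{\Gamma})$), so that the whole argument reduces to verifying two hypotheses: that $\mathcal{D}$ is bounded on $(\mathbb{P}[z],\Vert\cdot\Vert_{L^{2}(\nu_{\Gamma})})$, and that each $a_k$ is a bounded point evaluation of $\nu_{\Gamma}$.

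First I would dispose of the boundedness of $\mathcal{D}$ with respect to $\Vert\cdot\Vert_{L^{2}(\nu_{\Gamma})}$. Since $\Gamma$ is a closed Jordan curve it is compact, hence $\mathrm{supp}(\nu_{\Gamma})\subseteq\Gamma$ is bounded, and for every $p(z)\in\mathbb{P}[z]$ one has $\int|zp(z)|^{2}\,d\nu_{\Gamma}\le(\max_{z\in\Gamma}|z|)^{2}\int|p(z)|^{2}\,d\nu_{\Gamma}$, so $\Vert\mathcal{D}\Vert\le\max_{z\in\Gamma}|z|<\infty$. By Remark 2 this is the same as the boundedness of $\mathcal{D}$ on $(\mathbb{P}[z],\Vert\cdot\Vert_{\mathbf{M}(\nu_{\Gamma})})$, so the first hypothesis of Proposition 3 holds with $\mathbf{M}=\mathbf{M}(\nu_{\Gamma})$. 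Note $\nu_{\Gamma}$ has infinite support because it lives on a curve, so the hypotheses of Corollary 4 are equally available.

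The substantive step is showing that every point of $int(\Gamma)$ is a bounded point evaluation of $\nu_{\Gamma}$. Here I would invoke the localization results for bpe's from \cite{EGT5}: since $\Gamma$ is a Jordan curve and polynomials are not dense in $L^{2}(\nu_{\Gamma})$, one has $int(\Gamma)\subseteq bpe(\nu_{\Gamma})$, and in particular $a_1,\dots,a_n\in bpe(\nu_{\Gamma})$. The mechanism behind this inclusion — which is the real content, and the point I expect to be the main obstacle in a self-contained treatment — is that non-density produces a nonzero $g\in L^{2}(\nu_{\Gamma})$ orthogonal to all monomials $z^{m}$; the Cauchy transform of $g\,d\nu_{\Gamma}$ then vanishes identically on the unbounded component of $\mathbb{C}\setminus\Gamma$ (all its moments at infinity vanish) while, by the Plemelj jump relations across $\Gamma$, it is not identically zero on $int(\Gamma)$, and this analytic representation allows one to bound $|p(a)|$ by a constant depending on $\mathrm{dist}(a,\Gamma)$ times $\Vert p\Vert_{L^{2}(\nu_{\Gamma})}$ for $a\in int(\Gamma)$. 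I would simply cite \cite{EGT5} for this fact.

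With both hypotheses verified, I would finish by quoting Corollary 4 directly: $\mathcal{D}$ is bounded on $(\mathbb{P}[z],\Vert\cdot\Vert_{\mathbf{S}})$ for the discrete Sobolev norm $\Vert p(z)\Vert^{2}_{\mathbf{S}}=\int|p(z)|^{2}\,d\nu_{\Gamma}+\sum_{k=1}^{n}|p'(a_k)|^{2}$; alternatively, one applies Proposition 3 to $\mathbf{M}=\mathbf{M}(\nu_{\Gamma})$, observing that this norm is precisely the discrete matrix Sobolev norm (\ref{sobPoint}) in that case. The boundedness of the set of zeros of the associated Sobolev orthogonal polynomials then follows from the general principle recalled in the Introduction and established in \cite{LP},\cite{EGT6}, namely that boundedness of $\mathcal{D}$ with respect to a Sobolev norm forces the zeros to be uniformly bounded. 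Apart from step three, which rests on \cite{EGT5}, every part of the argument is bookkeeping with the results already proved in the excerpt.
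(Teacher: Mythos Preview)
Your argument is essentially the paper's own proof: cite \cite{EGT5} to conclude that interior points of $\Gamma$ are bounded point evaluations of $\nu_\Gamma$, then invoke the preceding result on discrete Sobolev norms (which the paper labels Corollary~2, not Corollary~4 as you write). The extra verification you supply --- boundedness of $\mathcal{D}$ on $L^2(\nu_\Gamma)$ from compact support, and the Cauchy-transform sketch behind \cite{EGT5} --- is correct but more than the paper itself spells out.
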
 
\begin{proof} By using  Corollary 6 in  \cite{EGT5}, it follows that every point in the interior of $\Gamma$ is a bpe of $\nu_{\Gamma}$, consequently by Corollary 2 the result holds. 

\end{proof}

\noindent In the sequel, we improve the above result by removing the atomic measure by a measure whose support is contained in the interior of $\Gamma$. In order to do it, we need the following technical lemma:

\begin{lem} Let $\mu$ be a measure with infinite  support and let $\mathbf{M}(\mu)$ be the associated moment matrix. Let $K$ be a compact set with  $K \subset \mathbb{R}^{2}\setminus {\it supp}(\mu)$ and $\gamma_{z_0}(\mathbf{M})>0$ for every $z_0\in K$. Then, there exists a constant $c_{K}>0$
such that $\gamma_{z_0}(\mathbf{M})\geq c_{K}$ for every $z\in K$. 
\end{lem}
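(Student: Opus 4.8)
The plan is to establish a uniform lower bound for the pointwise index $\gamma_{z_0}(\mathbf{M})$ over the compact set $K$ by a compactness-plus-continuity argument, after first verifying that $z_0 \mapsto \gamma_{z_0}(\mathbf{M})$ behaves well on $\mathbb{R}^2\setminus\operatorname{supp}(\mu)$. The key technical input is that $\gamma_{z_0}(\mathbf{M}) = \inf\{\,v\mathbf{M}v^* : \sum_k v_k z_0^k = 1,\ v\in c_{00}\,\}$ is, on the set where it is positive (equivalently, by Proposition~2, where $z_0$ is a bpe of $\mathbf{M}$), the reciprocal of the squared norm of the evaluation functional $p \mapsto p(z_0)$ on the Hilbert space $\mathcal{H}(\mathbf{M})$ obtained by completing $\mathbb{P}[z]$ with respect to $\Vert\cdot\Vert_{\mathbf{M}}$. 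Since $K$ is disjoint from $\operatorname{supp}(\mu)$, each $z_0\in K$ lies in the unbounded component structure away from the support, and one expects the evaluation functionals to vary continuously — indeed real-analytically — in $z_0$ there.

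First I would recall that $a$ is a bpe of $\mathbf{M}(\mu)$ iff $\gamma_a(\mathbf{M})>0$ (Proposition~2 / \cite{EGT5}), so the hypothesis says precisely that every point of $K$ is a bpe. Then I would show that the map $z_0 \mapsto \gamma_{z_0}(\mathbf{M})$ is lower semicontinuous on the open set $U := \mathbb{R}^2\setminus\operatorname{supp}(\mu)$, or better, that $z_0 \mapsto 1/\gamma_{z_0}(\mathbf{M}) = \Vert \mathrm{ev}_{z_0}\Vert^2_{\mathcal{H}(\mathbf{M})^*}$ is upper semicontinuous (in fact continuous) on $U$. The mechanism is: for fixed $z_0\in U$ with $\gamma_{z_0}>0$, the reproducing-type element $k_{z_0}\in\mathcal{H}(\mathbf{M})$ representing evaluation satisfies $p(z_0) = \langle p, k_{z_0}\rangle_{\mathbf{M}}$ and $\Vert k_{z_0}\Vert^2 = 1/\gamma_{z_0}$; one then estimates $|p(z_0)-p(z_1)|$ for $z_1$ near $z_0$ using that, on a fixed compact neighborhood $V\subset U$ of $z_0$, all points of $V$ are bpe's with a common constant (this local uniform boundedness itself needs a small argument — see below), which lets one bound $\Vert k_{z_0}-k_{z_1}\Vert \to 0$ as $z_1\to z_0$.

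The main obstacle is exactly obtaining the \emph{local} uniform bound on the bpe constants, i.e.\ showing every point of $U$ has a neighborhood on which $\gamma_{z_0}$ is bounded below — this is the crux, and the global statement then follows by covering $K$ with finitely many such neighborhoods and taking the minimum of the local constants. To get the local bound I would argue as follows: fix $z_0\in U$ and a closed disc $D(z_0,r)\subset U$. For $z_1\in D(z_0,r/2)$ and any $p\in\mathbb{P}[z]$, write $p(z_1) = p(z_0) + (z_1-z_0)p'(\xi)$-type expansions are not available for abstract $\mathbf{M}$, so instead use the Cauchy integral representation on the circle $|\,\zeta - z_0\,| = r$, i.e.\ $p(z_1) = \frac{1}{2\pi i}\oint_{|\zeta-z_0|=r} \frac{p(\zeta)}{\zeta - z_1}\,d\zeta$, combined with the fact that $\zeta\mapsto \gamma_\zeta(\mathbf{M})$ is (by hypothesis) positive at every $\zeta$ on that circle — however this still only gives positivity pointwise on the circle, not uniformly, so one must bootstrap: cover the circle by countably/finitely many arcs, or invoke that the set of bpe's is open with $\gamma$ bounded below on compact subsets by an internal argument using the Baire category theorem or the uniform boundedness principle applied to the family of evaluation functionals $\{\mathrm{ev}_\zeta\}_{\zeta\in D(z_0,r)}$ acting on $\mathcal{H}(\mathbf{M})$. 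The cleanest route is: since polynomials are bounded on $D(z_0,r)$ in terms of their values on a slightly larger circle via Cauchy's formula, the functionals $\mathrm{ev}_{z_1}$ for $z_1\in D(z_0,r/2)$ are dominated by a fixed continuous functional built from the circle values, and if each $\mathrm{ev}_\zeta$ on the circle is bounded on $\mathcal{H}(\mathbf{M})$ (which it is, by hypothesis, possibly after shrinking so the circle lies in $U$ and applying the hypothesis to $K$ enlarged to include the circle), then by the uniform boundedness principle the family is uniformly bounded, yielding the local lower bound on $\gamma$. One then covers $K$ by finitely many such discs and sets $c_K := \min$ of the finitely many local constants. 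I expect the delicate point to be arranging the hypothesis to apply to the auxiliary circles — for this I would, if needed, first replace $K$ by a slightly fattened compact neighborhood still inside $U$ on which, by continuity of $\gamma$ established above, positivity still holds, closing the loop.
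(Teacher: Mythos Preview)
Your approach is genuinely different from the paper's, and it contains a real gap that you yourself flag but do not close.

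The paper's argument is concrete and uses the $L^2(\mu)$ realization in an essential way. For $w\notin\operatorname{supp}(\mu)$ the function $\dfrac{1}{z-w}$ lies in $L^2(\mu)$, and by writing any polynomial with $p(w)=1$ as $p(z)=1+(z-w)q(z)$ one obtains the two-sided estimate
\[
d_w^{\,2}\,\operatorname{dis}^2\!\Bigl(\tfrac{1}{z-w},\mathbb{P}[z]\Bigr)\;\le\;\gamma_w(\mathbf{M})\;\le\;D_w^{\,2}\,\operatorname{dis}^2\!\Bigl(\tfrac{1}{z-w},\mathbb{P}[z]\Bigr),
\]
with $d_w,D_w$ the min/max of $|z-w|$ over $\operatorname{supp}(\mu)$. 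On $K$ these constants are uniformly controlled by $r=\operatorname{dis}(K,\operatorname{supp}(\mu))>0$ and a global $R$. The map $w\mapsto \dfrac{1}{z-w}$ is Lipschitz from $K$ into $L^2(\mu)$, so $w\mapsto \operatorname{dis}\bigl(\tfrac{1}{z-w},\mathbb{P}[z]\bigr)$ is continuous on $K$; being pointwise positive (from the hypothesis $\gamma_w>0$), it attains a positive minimum $\alpha$ on the compact set $K$, and $c_K=r^2\alpha^2$ works. The authors then remark explicitly (Remark~3) that the use of the measure is essential to their proof and that they do not know a purely matrix-analytic argument.

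Your sketch tries to proceed abstractly via the evaluation functionals on $\mathcal{H}(\mathbf{M})$, Cauchy's formula, and the uniform boundedness principle, avoiding the $L^2(\mu)$ realization. The circularity you note at the end is fatal as written: to bound $\mathrm{ev}_{z_1}$ for $z_1$ near $z_0$ via Cauchy you need a uniform bpe bound on a surrounding circle; the hypothesis gives only pointwise positivity of $\gamma$ on $K$, not on any fattening of $K$, and your proposed fix (``replace $K$ by a slightly fattened compact neighborhood\dots on which, by continuity of $\gamma$ established above, positivity still holds'') presupposes the very continuity/openness you are trying to prove. Likewise, applying UBP to the family $\{\mathrm{ev}_w\}_{w\in K}$ on $\mathcal{H}(\mathbf{M})$ requires checking that $\sup_{w\in K}|\mathrm{ev}_w(f)|<\infty$ for every $f\in\mathcal{H}(\mathbf{M})$; for polynomials this is clear, but for general $f$ it amounts to continuity (or at least boundedness) of $w\mapsto\langle f,k_w\rangle$, which again is the point at issue. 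Nothing in your outline supplies the missing ingredient, and the paper's own remark suggests this is not an oversight one can patch with abstract Hilbert-space reasoning alone. To repair your argument you would need to bring in the measure, at which point the route through $\operatorname{dis}\bigl(\tfrac{1}{z-w},\mathbb{P}[z]\bigr)$ is both shorter and cleaner.
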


\begin{proof}   Let  $\{P_{k}(z)\}_{k=0}^{\infty}$ be  the sequence of orthonormal polynomials associated with $\mu$.  By using the results in \cite{EGT1} it holds that if 
$w \notin {\it supp}(\mu)$, in particular when $w\in K$ it follows,  
$$
\dfrac{1}{D_{w}^2\sum_{k=0}^{\infty} \vert P_k(w)\vert^2} \leq {\it dis}^2 \left( \dfrac{1}{z-w}, \mathbb{P}[z] \right)
 \leq \dfrac{1}{d_{w}^2\sum_{k=0}^{\infty} \vert P_k(w)\vert^2},
$$
\noindent where $d_{w}=\min \{ \vert z - w \vert: z\in {\it supp}(\mu)\}$, $D_{w}= \max \{ \vert z - w \vert: z\in {\it supp}(\mu)\}.$

\noindent Consequently, for every $w \in K$ it holds 
$$
d_{w}^{2} {\it dis}^2 \left( \dfrac{1}{z-w}, \mathbb{P}[z] \right) \leq \gamma_{w}(\mathbf{M}) \leq D_{w}^{2}{\it dis}^2 \left( \dfrac{1}{z-w}, \mathbb{P}[z] \right). 
$$

\noindent Consider $r={\it dis}(K,{\it supp}(\mu))= \min \{ \vert z-w\vert : z\in K, w\in {\it supp}(\mu)\}>0$ and  $R={\it max} \{ \vert z-w \vert , z\in {\it supp}(\mu), w\in K \}$,  obviously $r\leq d_{w}$  and $D_{w} \leq R$ for every $w\in K$ and 
$$
r^{2} {\it dis}^2 \left( \dfrac{1}{z-w}, \mathbb{P}[z] \right) \leq \gamma_{w}(\mathbf{M}) \leq R^{2}{\it dis}^2 \left( \dfrac{1}{z-w}, \mathbb{P}[z] \right).
$$

\noindent The  function ${\it dis}(\dfrac{1}{z-w},\mathbb{P}[z])$ is uniformly continuous on $K$ with constant  $\frac{1}{r^2}$ since  

$$
\Vert  \dfrac{1}{z-w_1} -p(z) \Vert \leq \Vert \dfrac{1}{z-w_1}-
\dfrac{1}{z-w_2}\Vert + \Vert \dfrac{1}{z-w_2}-p(z) \Vert .
$$

\noindent And by taking infimum over all polynomials  
$$
{\it dis}(\dfrac{1}{z-w_1}, \mathbb{P}[z])) \leq  \Vert \dfrac{1}{z-w_1}-
\dfrac{1}{z-w_2}\Vert + {\it dis}( \dfrac{1}{z-w_2},\mathbb{P}[z]).
$$
\noindent Therefore 
$$
\vert {\it dis}(\dfrac{1}{z-w_1}, \mathbb{P}[z]) - {\it dis}(\dfrac{1}{z-w_2}, \mathbb{P}[z]) \vert \leq \Vert \dfrac{1}{z-w_1}-\dfrac{1}{z-w_2}\Vert \leq \dfrac{\vert w_1-w_2\vert}{r^2}.
$$

\noindent By continuity since  $K$ is a compact set we may consider 
 
$$
\alpha  = \min \{ {\it dis}(\dfrac{1}{z-w}, \mathbb{P}[z]), z\in K\} >0.
$$

\noindent Therefore by taking $C_{K}=\alpha r^{2}$ it  follows that for every $w\in K$ 
$$
\gamma_{w}(\mathbf{M}) \geq r^{2}\alpha =C.
$$
\end{proof}

\begin{remark} Note that in this proof is essential the use of a measure in order to estimate the distance of the functions $\dfrac{1}{z-z_0}$ to the vector space of polynomials. We do not know if it is possible to do a proof  without using the measure, only with techniques of  matrix analysis. 
\end{remark}

\begin{corollary} Let $\mu$ be a measure with infinite support and let $K$ be a compact set with $K \subset \mathbb{R}^{2}\setminus {\it supp}(\mu)$. Assume that $K \subset bpe(\mu)$,  then there exists a positive constant $C_{K}$ such that for every $z\in K$
$$
\vert p(z) \vert^{2} \leq C_K \int \vert p(z) \vert^{2} d\mu.
$$
\end{corollary}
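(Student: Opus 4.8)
The plan is to read off the statement from Lemma~2 together with the characterization of bounded point evaluations in terms of the index $\gamma_a$ provided by Proposition~2.

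First I would unwind the hypotheses. Since $K\subset bpe(\mu)$, every $z_0\in K$ is a bounded point evaluation of $\mu$, hence a bounded point evaluation of the associated moment matrix $\mathbf{M}=\mathbf{M}(\mu)$ (Remark~1), and therefore, by Proposition~2, $\gamma_{z_0}(\mathbf{M})>0$ for every $z_0\in K$. As $K$ is compact and $K\subset\mathbb{R}^{2}\setminus{\it supp}(\mu)$, Lemma~2 applies and yields a constant $c_K>0$ with $\gamma_{z_0}(\mathbf{M})\geq c_K$ for all $z_0\in K$.

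It then remains only to translate this uniform lower bound back into a pointwise evaluation inequality. Given $p(z)=\sum_k v_k z^k\in\mathbb{P}[z]$ with coefficient vector $v\in c_{00}$, recall that $v\mathbf{M}v^{*}=\int|p(z)|^{2}\,d\mu$. If $p(z_0)\neq 0$, then the vector $v/p(z_0)$ is admissible in the infimum defining $\gamma_{z_0}(\mathbf{M})$, so
$$
\gamma_{z_0}(\mathbf{M})\ \leq\ \frac{1}{|p(z_0)|^{2}}\,v\mathbf{M}v^{*}\ =\ \frac{1}{|p(z_0)|^{2}}\int|p(z)|^{2}\,d\mu .
$$
Combining with $\gamma_{z_0}(\mathbf{M})\geq c_K$ gives $|p(z_0)|^{2}\leq c_K^{-1}\int|p(z)|^{2}\,d\mu$; the case $p(z_0)=0$ is trivial. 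Taking $C_K=c_K^{-1}$ proves the claim, uniformly for $z_0\in K$.

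I do not expect any real obstacle here: the substantive content, namely the uniformity of the lower bound for $\gamma_{z_0}$ over the compact set $K$, is exactly Lemma~2, whose proof rests on the distance estimates of \cite{EGT1}. The present corollary is merely the restatement of that bound in the language of point evaluations, and the only minor care needed is the separate (trivial) treatment of polynomials vanishing at $z_0$.
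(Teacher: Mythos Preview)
Your proof is correct and is exactly the argument the paper has in mind: the corollary is stated without proof immediately after Lemma~2, since it is just the combination of Proposition~2 (to pass from $K\subset bpe(\mu)$ to $\gamma_{z_0}(\mathbf{M})>0$), Lemma~2 (to obtain the uniform lower bound $c_K$), and the definition of $\gamma_{z_0}$ (to rewrite the bound as a pointwise evaluation inequality with $C_K=c_K^{-1}$). Your translation step via the normalized vector $v/p(z_0)$ is the standard way to unwind that definition, and the trivial case $p(z_0)=0$ is correctly noted.
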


\noindent As a consequence of the above results, we provide examples of vectorial measures which are matrix sequentially dominated.

\begin{prop} Let $\mu=(\mu_0,\mu_1)$ be a vectorial measure  with ${\it supp}(\mu_1) \subset bpe(\mu_0)$.
Then $(\mu_0,\mu_1)$ is dominated by matrix sequentiality. 
\end{prop}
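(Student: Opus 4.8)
The statement is that $\{\mathbf{M}(\mu_0),\mathbf{M}(\mu_1)\}$ is a set of sequentially dominated matrices, i.e.\ (by Definition 1 and the identification of a moment matrix with the associated $L^{2}$ pairing) that there is a constant $C>0$ with
$$
\int|p(z)|^{2}\,d\mu_1\ \le\ C\int|p(z)|^{2}\,d\mu_0,\qquad p(z)\in\mathbb{P}[z].
$$
The plan is to first produce one constant $C_K>0$ for which $|p(a)|^{2}\le C_K\int|p(z)|^{2}\,d\mu_0$ holds \emph{simultaneously} for all $a$ in the compact set $K:=\operatorname{supp}(\mu_1)$, and then simply integrate this inequality in the variable $a$ against the finite measure $\mu_1$.

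First I would note that $K=\operatorname{supp}(\mu_1)$ is compact (all measures here are finite and compactly supported) and that $K\subset bpe(\mu_0)$ by hypothesis, so $\gamma_{a}(\mathbf{M}(\mu_0))>0$ for every $a\in K$ by Proposition 2. The key point is to upgrade this pointwise positivity to a uniform lower bound: since $K$ is a compact subset of $bpe(\mu_0)$ lying off $\operatorname{supp}(\mu_0)$, Lemma 2 gives $c_K>0$ with $\gamma_{a}(\mathbf{M}(\mu_0))\ge c_K$ for every $a\in K$, which is exactly the assertion of Corollary 4, namely that there is $C_K>0$ with
$$
|p(a)|^{2}\ \le\ C_K\int|p(z)|^{2}\,d\mu_0,\qquad a\in K,\ p(z)\in\mathbb{P}[z].
$$
This is the hard part: each individual bounded point evaluation $a$ comes only with its own constant, and what makes the bound uniform over all of $K$ is the compactness argument behind Lemma 2, which uses $\mu_0$ to sandwich $\gamma_{a}(\mathbf{M}(\mu_0))$ between multiples of $\operatorname{dis}^{2}\!\big(\frac{1}{z-a},\mathbb{P}[z]\big)$ and to obtain the uniform continuity of $a\mapsto \operatorname{dis}\!\big(\frac{1}{z-a},\mathbb{P}[z]\big)$ on $K$; it is precisely here that the separation of $K$ from $\operatorname{supp}(\mu_0)$ enters, and this is the hypothesis that has to be in force for the argument to run.

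With the displayed uniform inequality in hand the rest is routine: integrating in $a$ with respect to $\mu_1$,
$$
\int|p(a)|^{2}\,d\mu_1(a)\ \le\ C_K\,\mu_1(K)\int|p(z)|^{2}\,d\mu_0\ =\ C\int|p(z)|^{2}\,d\mu_0,
$$
where $C:=C_K\,\mu_1(\mathbb{C})<\infty$. Hence $\mathbf{M}(\mu_1)\le C\,\mathbf{M}(\mu_0)$, so $\{\mathbf{M}(\mu_0),\mathbf{M}(\mu_1)\}$ is sequentially dominated and $(\mu_0,\mu_1)$ is dominated by matrix sequentiality. In particular, by Theorem 1 of \cite{EG23} the multiplication operator $\mathcal{D}$ is then bounded on $\mathbb{P}[z]$ for the Sobolev norm induced by $(\mu_0,\mu_1)$, and therefore the zeros of the associated Sobolev orthogonal polynomials are uniformly bounded.
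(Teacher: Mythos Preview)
Your proof is correct and follows exactly the same route as the paper: invoke Corollary~4 to obtain a uniform bpe constant $C_K$ on $K=\operatorname{supp}(\mu_1)$, then integrate the pointwise inequality $|p(a)|^{2}\le C_K\int|p|^{2}\,d\mu_0$ against $\mu_1$ to get $\mathbf{M}(\mu_1)\le C_K\,\mu_1(K)\,\mathbf{M}(\mu_0)$. You are in fact more explicit than the paper in flagging the implicit side condition $K\cap\operatorname{supp}(\mu_0)=\emptyset$ that Lemma~2 and Corollary~4 require.
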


\begin{proof} By  Corollary 4, all points in $K$ are bpe's with uniform constant $C_K>0$. Therefore, 

$$ 
\int  \vert p(z) \vert^{2} d\mu_1 \leq \int_{K} \left(C_{K} \int \vert p(z) \vert^{2} d\mu_0 \right)d\mu_1 \leq 
\mu_1(K) C_{K} \int \vert p(z) \vert^{2} d\mu_0 . 
$$

\noindent Consequently, $\mathbf{M}(\mu_1) \leq \mu_1(K) C_{K} \mathbf{M}(\mu_0)$, i.e.,  
 $(\mu_0,\mu_1)$ is matrix sequentiality  dominated.
\end{proof}

\noindent In what follows, we consider some examples of application of the previous results for measures supported on Jordan curves. Bounded point evaluations for this kind of measures have been studied, using the matrix approach, in \cite{EGT5}. There, it is proved that for a measure $\mu$ supported on a Jordan curve verifying that polynomials are not dense in $L^{2}(\mu)$ it follows that all the points in the interior of the curve are bpe's of the measure. Combining these results with Proposition 4 we obtain some examples of vectorial measures which are matrix sequentially dominated: 

\begin{prop} Let $\nu_{\Gamma}$ be a measure supported on a closed Jordan curve $\Gamma$ verifying $P^{2}(\nu_{\Gamma}) \neq L^{2}(\nu_{\Gamma})$. Let $\mu$ be any measure supported in the interior of $\Gamma$. Then the vectorial measure  $\mu=(\nu_{\Gamma},\mu)$  is matrix sequentially dominated and consequently the multiplication operator is bounded with respect to the norm 
$$
\Vert p(z) \Vert^{2} = \int \vert p(z) \vert^{2} d\nu_{\Gamma} + \int \vert p(z) \vert^{2} d\mu .
$$
\end{prop}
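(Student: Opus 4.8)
The plan is to reduce the statement to checking that the pair $(\nu_\Gamma,\mu)$ satisfies the hypotheses of Proposition 4, so that matrix sequential domination comes for free, and then to invoke the sequential-domination criterion of \cite{EG23} (Theorem 1, recalled in Section 1) — equivalently, Proposition 1 applied to $\{\mathbf{M}(\nu_\Gamma),\mathbf{M}(\mu)\}$ — to obtain boundedness of $\mathcal{D}$. Thus the whole task is to produce a constant $C>0$ with $\mathbf{M}(\mu)\leq C\,\mathbf{M}(\nu_\Gamma)$, i.e. $\int|p(z)|^2\,d\mu\leq C\int|p(z)|^2\,d\nu_\Gamma$ for all $p(z)\in\mathbb{P}[z]$.

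First I would fix the geometry. Since $\Gamma$ is a closed Jordan curve it is compact, so by the Jordan curve theorem $int(\Gamma)$ is a bounded open set. As $\mu$ is supported in $int(\Gamma)$, the set $K:={\it supp}(\mu)$ is a closed subset of the bounded set $int(\Gamma)$, hence compact; and being a compact subset of the open set $int(\Gamma)$ it lies at strictly positive distance from $\Gamma$. In particular $K\cap\Gamma=\emptyset$, and since ${\it supp}(\nu_\Gamma)\subseteq\Gamma$ we get $K\subset\mathbb{R}^2\setminus{\it supp}(\nu_\Gamma)$, which is precisely the standing hypothesis on $K$ required in Corollary 4 (and hence in the proof of Proposition 4).

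Next I would use the localization of bounded point evaluations for measures on Jordan curves from \cite{EGT5}: under the assumption $P^2(\nu_\Gamma)\neq L^2(\nu_\Gamma)$, every point of $int(\Gamma)$ is a bpe of $\nu_\Gamma$. Combined with the previous paragraph, $K={\it supp}(\mu)\subset int(\Gamma)\subseteq bpe(\nu_\Gamma)$, so Proposition 4 applies verbatim with $\mu_0=\nu_\Gamma$, $\mu_1=\mu$. Concretely, Corollary 4 (which rests on the uniform estimate of Lemma 2) gives a constant $C_K>0$ with $|p(z)|^2\leq C_K\int|p|^2\,d\nu_\Gamma$ for every $z\in K$ and every $p\in\mathbb{P}[z]$, whence
$$\int|p(z)|^2\,d\mu\leq\int_K\Big(C_K\int|p|^2\,d\nu_\Gamma\Big)\,d\mu\leq\mu(K)\,C_K\int|p(z)|^2\,d\nu_\Gamma,$$
that is $\mathbf{M}(\mu)\leq\mu(K)\,C_K\,\mathbf{M}(\nu_\Gamma)$. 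So $(\nu_\Gamma,\mu)$ is matrix sequentially dominated. Finally, since ${\it supp}(\nu_\Gamma)$ is compact, $\mathcal{D}$ is bounded on $(\mathbb{P}[z],\Vert\cdot\Vert_{L^2(\nu_\Gamma)})$, and sequential domination then yields boundedness of $\mathcal{D}$ for the associated norm, as claimed.

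I do not expect a genuine obstacle here: the statement is an assembly of Proposition 4 with the Jordan-curve bpe theorem of \cite{EGT5}. The one point that must be verified rather than quoted is that ${\it supp}(\mu)$ is compact and sits at positive distance from $\Gamma$, since this is exactly what licenses the passage from pointwise bpe's to the \emph{uniform} constant $C_K$ of Corollary 4 / Lemma 2; without compactness one obtains only a pointwise, possibly non-uniform, estimate, which does not give the domination $\mathbf{M}(\mu)\leq C\,\mathbf{M}(\nu_\Gamma)$. I would also note that the displayed norm in the statement should presumably read $\int|p(z)|^2\,d\nu_\Gamma+\int|p'(z)|^2\,d\mu$ for the conclusion to be the intended Sobolev statement; the argument above covers that case (and the trivial one) simultaneously.
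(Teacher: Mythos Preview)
Your proposal is correct and follows exactly the route the paper intends: the text immediately preceding the proposition says that the result is obtained by ``combining these results with Proposition 4'', where ``these results'' are the bpe localization from \cite{EGT5} asserting $int(\Gamma)\subset bpe(\nu_\Gamma)$ when $P^2(\nu_\Gamma)\neq L^2(\nu_\Gamma)$. Your contribution is to make explicit the compactness of $K=\mathit{supp}(\mu)$ and its disjointness from $\Gamma$, which is precisely the unstated check needed to feed $K$ into Corollary~4/Lemma~2 and hence Proposition~4; your side remark about the displayed norm (missing derivative) is also apt.
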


%\begin{remark} \textcolor{red}{????????}Note that measures supported on a Jordan curve $\Gamma$ verifying that $P^{2}(\nu_{\Gamma}) = L^{2}(\nu_{\Gamma})$ does not have bounded point evaluations (see e.g.(see e.g. \cite{EGT5}). Therefore, for such measures $\nu_{\Gamma}$ and any point $a\in \CC$ the vectorial measures $\mu=(\nu_{\Gamma},\delta_{a})$ with $a\in \CC$ are not sequentially dominated. For instance,  consider  $\Gamma = \mathbb{T}$ and any measure $\mu_0=\nu_{\mathbb{T}}$ verifying  $P^{2}(\nu_{\Gamma}) = L^{2}(\mu_0)$, for instance  $d\mu_0(\theta)=\dfrac{w(\theta)}{2\pi} d\theta$ with $w(\theta)=2+2\cos(\theta)$ (see e.g. \cite{EGT1}) and   $\mu_1=\delta_{0}$. Since $\mu_0$ has no bounded points of evaluation, therefore $0$ is not a bpe of $\mu_0$ (see e.g. \cite{EGT5}) and this  means that  
%there is not a constant $C>0$
%such that for every polynomial 
%$$
%\int \vert p(z) \vert^{2} d\mu_1 =\vert p(0)\vert^{2} \leq C \int \vert p(z) \vert^{2} d\mu_0; 
%$$
%\noindent consequently the vectorial measure   $\mu=(\mu_0,\mu_1)$  is not  sequentially dominated.
%\end{remark} 

%\begin{corollary} Let $\Gamma$ be a closed Jordan curve  and let $\nu_{\Gamma}$ be  a measure infinitely supported on $\Gamma$ verifying  $P^{2}(\nu_{\Gamma}) \neq L^{2}(\nu_{\Gamma})$. If $K$ is a compact set contained in ${\it int}(\Gamma)$ there exists a constant $C_{K}>0$ such that for every measure supported on $K$ it holds:  
%$$
%\int \vert p(z) \vert^{2}d\mu  \leq C_{K} \int \vert p(z) \vert^{2} d\nu_{\Gamma}. 
%$$

%\end{corollary} 

\noindent \begin{remark} It is interesting to point out  that  Proposition 5  provides a great variety  of examples of matrix sequentially dominated vectorial measures $\mu=(\mu_0,\mu_1)$ which are not sequentially dominated. For example, vectorial measures where  the first measure is any Lebesgue measure on a circle and the second one is any measure (discrete, singular, etc) whose  support is contained in its interior. 
\end{remark}

\noindent In what follows, we focus on the study of Lebesgue Sobolev orthogonality on the unit circle, or more generally, on arbitrary circles in the complex plane.  Let ${\bf m_{a;r}}$ denote the normalized Lebesgue measure on the circle $\mathbb{S}_{r}(a)=\{z\in \CC: \vert z-a \vert =r\}$ (in the particular case of the unit circle ${\bf m}_{0;1}={\bf m})$. Let $\nu$ be a measure on the unit circle and consider  the  Sobolev norm
$$
\Vert p(z) \Vert^{2}_{\mathbf{S}}= \int \vert p(z) \vert^{2} d \nu + 
 \sum_{k=1}^{n} \int \vert p'(z) \vert^{2} d{\bf m_{a_k;r_k}},
$$
\noindent with $a_1,\dots,a_k\in \CC$ and $r_1,\dots,r_k>0$. In \cite{Cachafeiro} the asymptotics of Sobolev orthogonal polynomials for this type are considered when only the Lebesgue measures in the unit circle are involved. Recall that by the Lebesgue decomposition $\nu=hd{\bf m}+\nu_s$ with $h\in L^{1}({\bf m})$ and $\nu_s$ a singular measure with respect to ${\bf m}$. By the Szeg"{o} theory, it is well known that $P^{2}(\nu)\neq L^{2}(\nu)$ if and only $\nu$ verifies the Szeg\"{o} condition, i.e. $log(h)\in L^{1}({\bf m})$. 

\begin{prop} Let $\nu$ be a measure supported in  the unit circle verifying Szeg\"{o} condition. In the following two cases:
\begin{enumerate} 
\item Discrete Sobolev inner products with $a_1,\dots,a_n\in \mathbb{D}$ 
$$
\Vert p(z) \Vert_{\mathbf{S}} = \int \vert p(z) \vert^{2} d\nu + \sum_{k=1}^{n} \vert p'(a_k) \vert^{2},
$$
\item Continuous Sobolev inner product with $a_1,\dots,a_n\in \CC$ with $\mathbb{D}(a_k;r_k) \subset \mathbb{D}$
$$
\Vert p(z) \Vert^{2}_{\mathbf{S}}= \int \vert p(z) \vert^{2} d \nu + 
 \sum_{k=1}^{n} \int \vert p'(z) \vert^{2} d{\bf m_{a_k;r_k}}
$$
\end{enumerate}
\noindent $\mathcal{D}$ is bounded on $(\mathbb{P}[z], \Vert \cdot \Vert_{\mathbf{S}})$. Consequently, the set of zeros of the Sobolev orthogonal polynomials is bounded. 
%\end{ejem}
\end{prop}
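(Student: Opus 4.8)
The plan is to recognize that in both cases $\Vert\cdot\Vert_{\mathbf{S}}$ is the Sobolev norm attached to a vectorial measure $(\nu,\mu_1)$ whose second component $\mu_1$ is a finite positive measure with compact support inside the open unit disk $\mathbb{D}$, and then to feed the bounded point evaluation results of Section 2 into the characterization of the boundedness of $\mathcal{D}$ for vectorial measures (Corollary 1). Concretely, in case (1) set $\mu_1=\sum_{k=1}^{n}\delta_{a_k}$, so that $\sum_{k}|p'(a_k)|^{2}=\int|p'|^{2}\,d\mu_1$ and $\mathrm{supp}(\mu_1)=\{a_1,\dots,a_n\}\subset\mathbb{D}$; in case (2) set $\mu_1=\sum_{k=1}^{n}{\bf m}_{a_k;r_k}$, so that the second term of $\Vert p\Vert_{\mathbf{S}}^{2}$ is $\int|p'|^{2}\,d\mu_1$ and, since the hypothesis $\mathbb{D}(a_k;r_k)\subset\mathbb{D}$ keeps each circle $\mathbb{S}_{r_k}(a_k)=\mathrm{supp}({\bf m}_{a_k;r_k})$ inside $\mathbb{D}$, again $\mathrm{supp}(\mu_1)\subset\mathbb{D}$. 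In either case $\Vert\cdot\Vert_{\mathbf{S}}$ is precisely the Sobolev norm of $(\nu,\mu_1)$, and $\mathrm{supp}(\mu_1)$ is compact, contained in $\mathbb{D}$, and disjoint from $\mathrm{supp}(\nu)\subseteq\mathbb{S}_1(0)$.

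Next I would collect what is needed about $\nu$. Writing $\nu=h\,d{\bf m}+\nu_s$ as in the Lebesgue decomposition, the Szeg\"{o} condition $\log h\in L^{1}({\bf m})$ forces $h>0$ ${\bf m}$-a.e.; hence $\nu$ has infinite support (so $\mathbf{M}(\nu)$ is HPD), and since $|z|=1$ on $\mathrm{supp}(\nu)$ the operator $\mathcal{D}$ is a contraction on $(\mathbb{P}[z],\Vert\cdot\Vert_{L^{2}(\nu)})$. By Szeg\"{o}'s theorem the Szeg\"{o} condition is equivalent to $P^{2}(\nu)\neq L^{2}(\nu)$, so the results of \cite{EGT5} applied to the closed Jordan curve $\mathbb{S}_1(0)$ give that every point of its interior is a bounded point evaluation of $\nu$, i.e. $\mathbb{D}\subset bpe(\nu)$.

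Since $\mathrm{supp}(\mu_1)$ is a compact subset of the interior $\mathbb{D}$ of $\mathbb{S}_1(0)$ and $P^{2}(\nu)\neq L^{2}(\nu)$, Proposition 5 (a measure on a Jordan curve together with a measure supported in its interior), or equivalently Proposition 4 using $\mathrm{supp}(\mu_1)\subset bpe(\nu)$, shows that $(\nu,\mu_1)$ is matrix sequentially dominated: there is $C>0$ with $\int|p|^{2}\,d\mu_1\le C\int|p|^{2}\,d\nu$ for all $p\in\mathbb{P}[z]$. Therefore
\[
\int|p|^{2}\,d\mu_1\;\le\;C\int|p|^{2}\,d\nu\;\le\;C\Big(\int|p|^{2}\,d\nu+\int|p'|^{2}\,d\mu_1\Big)\;=\;C\,\Vert p\Vert_{\mathbf{S}}^{2},
\]
which is exactly condition (2) of Corollary 1; since moreover $\mathcal{D}$ is bounded on $(\mathbb{P}[z],\Vert\cdot\Vert_{L^{2}(\nu)})$, Corollary 1 yields that $\mathcal{D}$ is bounded on $(\mathbb{P}[z],\Vert\cdot\Vert_{\mathbf{S}})$. (In case (1) one may instead quote the discrete Sobolev corollary, Corollary 2, directly, with $a_1,\dots,a_n\in\mathbb{D}\subset bpe(\nu)$.) Finally, boundedness of $\mathcal{D}$ with respect to $\Vert\cdot\Vert_{\mathbf{S}}$ forces the set of zeros of the associated Sobolev orthogonal polynomials to be bounded by \cite{LP} (equivalently \cite{EGT6}).

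As the argument merely assembles results already established, the only points requiring care are bookkeeping: one should check that the geometric hypothesis $\mathbb{D}(a_k;r_k)\subset\mathbb{D}$ genuinely keeps the circles $\mathbb{S}_{r_k}(a_k)$ inside the open disk $\mathbb{D}$ (so that $\mathrm{supp}(\mu_1)$ lies in the interior of $\mathbb{S}_1(0)$, disjoint from $\mathrm{supp}(\nu)$, and Proposition 5 is applicable), and one should invoke the Szeg\"{o}-theoretic equivalence between the Szeg\"{o} condition and $P^{2}(\nu)\neq L^{2}(\nu)$, which is what converts the stated hypothesis into an input for the bounded point evaluation results of \cite{EGT5}. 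I would not anticipate any further obstacle.
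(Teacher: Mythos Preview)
Your proposal is correct and follows exactly the route the paper intends: the paper does not write out a separate proof of this proposition but states it immediately after recalling that the Szeg\"{o} condition is equivalent to $P^{2}(\nu)\neq L^{2}(\nu)$, so that case (1) falls under Corollary~2/Corollary~3 and case (2) under Proposition~5 (via Proposition~4), precisely as you assemble them. Your bookkeeping remarks (that $\mathrm{supp}(\mu_1)\subset\mathbb{D}$ is disjoint from $\mathrm{supp}(\nu)$, and that the Szeg\"{o} condition is what feeds into the bpe results of \cite{EGT5}) are the only points to verify, and you have identified them correctly.
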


\noindent In the particular case of $\nu={\bf m_{b;r}}$ the situation is as follows. 
\begin{ejem} Let $\mu_0={\bf m}_{b;r}$ 
 and $a_1,\dots,a_N\in \CC$.  Let  $r_1,\dots,r_N>0$ be with $\mathbb{D}(a_k;r_k) \subset \mathbb{D}(b;r)$ and consider the Sobolev norm:  
 $$
 \Vert p(z) \Vert^{2}_{\mathbf{S}} = \int \vert p(z) \vert^{2} d{\bf m}_{b;r}+ \sum_{k=1}^{N} \int \vert p'(z) \vert^{2} d{\bf m}_{a_k;r_k},
 $$
 \noindent $\mathcal{D}$ is bounded with respect this norm and consequently, the 
  set of zeros of Sobolev orthogonal polynomials is bounded. 
 
\end{ejem}

%\noindent \begin{remark} By \cite{EGT5} for measures supported on Jordan curves which are non completes all points in the interior of $\Gamma$ are bpe's of the measure. Moreover, in the following proposition we prove that they are indeed  analytic bounded points. Recall that a point $z_0$ is an analytic bounded point of a measure $\mu$ if there exists $r>0$ and $C>0$  such that for every $w\in B_{r}(z_0)$ it 
%\end{remark}

%\noindent Therefore there exists a constant $C>0$ such that 
%for every  $z_0\in int(\Gamma)= \mathbb{D}$ 
%$$
%\gamma_{z_0}(\mathbf{I})\leq 1
%$$
%\noindent We do not know if this remains true in the more general case of measures supported on Jordan curves under the assumption of non completness of $P^{2}(\mu)\neq L^{2}(\mu)$.
%\end{remark} 

%\noindent {\bf Problem} Let $\nu_{\Gamma}$ be a measure supported on a Jordan curve  $\Gamma$ such that $P^{2}(\nu_{\Gamma})\neq L^{2}(\nu_{\Gamma})$, by \cite{EGT5} every point in the interior of $\Gamma$ is a bounded point evaluation, i.e., $\gamma_{z_0}(\mathbf{M})>0$ being $\mathbf{M}$ the associated moment matrix.  Does there exists a constant  $C>0$ such that $\gamma_{z}(\mathbf{M}) \geq C$ for all $z\in {\it int}(\Gamma)$? 

%\noindent We give a partial answer: 

%\begin{prop} Let $\nu_{\Gamma}$ be a measure supported on a Jordan curve  $\Gamma$. Let $\lamba_n$ be the smallest eigenvalue of the $(n+1)$ main section of the moment matrix. Then, there exists a constant $C>0$ such that $\gamma_{z}(\mathbf{M}) \geq C$ for all $z\in {\it int}(\Gamma)$.
%\end{prop} 

\section{Polynomial Wirtinger-type inequalities.} 

\noindent In this section, we introduce polynomial Wirtinger-type inequalities in the sense of inequalities that relate the norm of a polynomial with the norm of its derivative. We say that a measure $\mu$ verifies a polynomial Wirtinger inequality if there exists $C>0$ 

$$
\int \vert p(z)\vert^2 d\mu \leq  C \int  \vert p'(z) \vert^2 d\mu, \qquad p(z) \in \mathbb{P}_0[z]
$$
\noindent where $\mathbb{P}_{0}[z]= \{p(z)\in \mathbb{P}[z] : p(0)=0\}$. 

\medskip

\noindent More generally,  we say that an HPD matrix $\mathbf{M}$ verifies a Wirtinger polynomial inequality if there exists $C>0$ such that
$$
\Vert p(z) \Vert^{2}_{\mathbf{M}} \leq C \Vert p'(z) \Vert^{2}_{\mathbf{M}}, \qquad p(z) \in \mathbb{P}_0[z].
$$
 We begin with an easy inequality for Lebesgue measures on circles: 

\begin{lem} Let  ${\bf m}_{a;r}$ be the normalized Lebesgue measure on $\mathbb{S}_r(a)$. 
Then, for every $p(z)\in \mathbb{P}[z]$ 

$$
\int \vert p(z) -p(a)\vert^2 d {\bf m}_{a;r} \leq  r^{2} \int  \vert p'(z) \vert^2 d {\bf m}_{a;r}.
$$
\end{lem}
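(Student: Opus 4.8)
The plan is to reduce the inequality to a Parseval computation on the unit circle after recentering at $a$. First I would expand an arbitrary $p(z)\in\mathbb{P}[z]$ in the Taylor basis centered at $a$, writing $p(z)=\sum_{k=0}^{n}c_k(z-a)^k$. Then $p(z)-p(a)=\sum_{k=1}^{n}c_k(z-a)^k$ and $p'(z)=\sum_{k=1}^{n}kc_k(z-a)^{k-1}$, so both functions appearing in the statement are explicit finite linear combinations of the monomials $(z-a)^j$.

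Next I would parametrize $\mathbb{S}_r(a)$ by $z=a+re^{i\theta}$, $\theta\in[0,2\pi)$, under which the normalized measure becomes $d{\bf m}_{a;r}=\tfrac{d\theta}{2\pi}$ and $(z-a)^j=r^je^{ij\theta}$. Using the orthonormality of $\{e^{ij\theta}\}_{j\in\ZZ}$ in $L^2(\tfrac{d\theta}{2\pi})$ one obtains
$$\int\vert p(z)-p(a)\vert^2\,d{\bf m}_{a;r}=\sum_{k=1}^{n}\vert c_k\vert^2 r^{2k},\qquad \int\vert p'(z)\vert^2\,d{\bf m}_{a;r}=\sum_{k=1}^{n}k^2\vert c_k\vert^2 r^{2k-2}.$$

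Finally I would compare the two series term by term: multiplying the second identity by $r^2$ gives $r^2\int\vert p'(z)\vert^2\,d{\bf m}_{a;r}=\sum_{k=1}^{n}k^2\vert c_k\vert^2 r^{2k}$, and since $k^2\ge 1$ for every $k\ge 1$, each summand dominates the corresponding summand of $\sum_{k=1}^{n}\vert c_k\vert^2 r^{2k}$, which is exactly the asserted inequality. There is no serious obstacle here; the only point requiring care is to expand in powers of $(z-a)$ rather than powers of $z$, so that the monomials are genuinely orthogonal on $\mathbb{S}_r(a)$ and differentiation shifts the index cleanly. One also reads off from the term-by-term comparison that equality can hold only if $c_k=0$ for all $k\ge 2$, i.e. only for affine $p$, which anticipates the rigidity phenomenon announced for the constant $C=1$.
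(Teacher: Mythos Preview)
Your proof is correct and follows essentially the same route as the paper: expand $p$ in powers of $(z-a)$, use the orthogonality of $\{(z-a)^k\}$ with respect to ${\bf m}_{a;r}$ to reduce both integrals to the sums $\sum_{k\ge 1}|c_k|^2 r^{2k}$ and $\sum_{k\ge 1}k^2|c_k|^2 r^{2(k-1)}$, and compare termwise using $k^2\ge 1$. The only cosmetic difference is that you justify the orthogonality via the explicit parametrization $z=a+re^{i\theta}$, whereas the paper simply quotes the moment formula $\int (z-a)^k\overline{(z-a)^n}\,d{\bf m}_{a;r}=r^{2k}\delta_{kn}$.
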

\begin{proof}  Let $p(z) \in \mathbb{P}[z]$, then 
$$
p(z)-p(a)=a_1(z-a)+a_2(z-a)^2+\dots+a_{n}(z-a)^{n}
$$
\noindent Since $\{(z-a)^k\}_{k=0}^{\infty}$ is a sequence of orthogonal polynomials with respect the measure ${\bf m}_{a;r}$ and  
$$
\int (z-a)^{k}\overline{(z-a)^{n}} d{\bf m}_{a;r}= \begin{cases} r^{2k} \qquad  if \; k=n \\ 0 \qquad \; if \; k\neq n,\end{cases} 
$$

\noindent it follows: 
$$
\int \vert p(z) -p(a) \vert^2 d{\bf m}_{a;r} = \sum_{k=1}^{n} \vert a_k \vert^2 r^{2k} =r^2(\sum_{k=1}^{n} \vert a_k \vert^2 r^{2(k-1)}).
$$

\noindent On the other hand, since $p'(z)=a_1+2a_2(z-a)+\dots+na_n(z-a)^{n-1}$ it follows 

$$
\int \vert p'(z) \vert^2 d{\bf m}_{a;r} = \sum_{k=1}^{n} k^2\vert a_k \vert^2 r^{2(k-1)}.
$$

\noindent Therefore, 

$$\int \vert p(z) -p(a)\vert^2 d{\bf m}_{a;r} =r^2(\sum_{k=1}^{n} \vert a_k \vert^2 r^{2(k-1)}) \leq  r^2(\sum_{k=1}^{n} k^2\vert a_k \vert^2 r^{2(k-1)})=r^2(\int \vert p'(z) \vert^2 d{\bf m}_{a;r}) ,
$$

\noindent as we required. 
\end{proof} 

\noindent  In the case of the Lebesgue measure ${\bf m}$ we obtain the inequality with constant $C=1$, that is,
$$
\int \vert p(z)\vert^{2} d{\bf m} \leq \int \vert p'(z) \vert^{2} d{\bf m}, \qquad p(z) \in \mathbb{P}_0[z].
$$

\noindent Moreover, we prove  that if a measure on the unit circle $\nu$ verifies a Wirtinger polynomial inequality with constant $1$, i.e. 
$$
\int \vert p(z) \vert^{2} d\nu \leq 
\int \vert p(z) \vert^{2} d\nu, \qquad p(z) \in \mathbb{P}_0[z],
$$
\noindent then $\nu=c{\bf m}$ for some $C>0$. To do it, we use a matrix approach.

\begin{prop} Let $\mathbf{M}$ be an HPD matrix.  Let  $\mathbf{M}^{(1,1)}$ be the matrix obtained by  removing the first row and column in $\mathbf{M}$, i.e. $\mathbf{M}^{(1,1)}=(c_{i+1,j+1})_{i,j=0}^{\infty}$ Then, the following are equivalent: 
 
\begin{enumerate} 
\item 
$
\Vert p(z) \Vert^2_{\mathbf{M}} \leq  \Vert p'(z) \Vert^2_{\mathbf{M}}, \qquad p(z)\in \mathbb{P}_{0}[z].
$
\smallskip
\item 
$$
\mathbf{M}^{(1,1)} \leq \begin{pmatrix}  1 & 0& 0 & 0 & \dots \\  0 & 2& 0 & 0 & \dots \\  0 & 0 & 3 & 0 & \dots \\  \vdots  & \vdots & \vdots & \vdots & \ddots \end{pmatrix} \mathbf{M}^{(1,1)}
\begin{pmatrix}  1 & 0& 0 & 0 & \dots \\  0 & 2& 0 & 0 & \dots \\  0 & 0 & 3 & 0 & \dots \\  \vdots  & \vdots & \vdots & \vdots & \ddots \end{pmatrix}.
$$

\end{enumerate} 
\end{prop}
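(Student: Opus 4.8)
The plan is to convert the functional inequality (1) into a Löwner inequality between infinite matrices by expanding everything in the monomial basis and reading off the associated quadratic forms, exactly the mechanism behind Definition 1. The one structural fact that drives the argument is that the hypothesis $p(0)=0$ kills the zeroth coordinate of the coefficient vector, so that $\Vert p\Vert_{\mathbf{M}}$ never involves the zeroth row or column of $\mathbf{M}$ and therefore depends only on the deleted submatrix $\mathbf{M}^{(1,1)}$.

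Concretely, I would take $p(z)=\sum_{k=1}^{n}v_kz^{k}\in\mathbb{P}_0[z]$ and attach to it the shifted vector $w=(v_1,v_2,\dots,v_n,0,\dots)\in c_{00}$. A direct expansion gives $\Vert p(z)\Vert^2_{\mathbf{M}}=\sum_{i,j\geq 1}v_i\overline{v_j}c_{ij}=w\,\mathbf{M}^{(1,1)}\,w^{*}$. For the right-hand side I would not differentiate naively but work with $zp'(z)=\sum_{k=1}^{n}kv_kz^{k}$, whose coefficient vector is $Dw$ for $D=\mathrm{diag}(1,2,3,\dots)$; since $zp'$ again has vanishing constant term, the same bookkeeping yields $\Vert zp'(z)\Vert^2_{\mathbf{M}}=(Dw)\,\mathbf{M}^{(1,1)}\,(Dw)^{*}=w\,(D\mathbf{M}^{(1,1)}D)\,w^{*}$, and this identity needs no special structure on $\mathbf{M}$. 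Comparing the two quadratic forms over all $w\in c_{00}$ turns the inequality $\Vert p\Vert_{\mathbf{M}}\leq\Vert zp'\Vert_{\mathbf{M}}$ on $\mathbb{P}_0[z]$ into precisely the matrix inequality (2) via Definition 1, and the reverse reading of the same identities gives the converse implication.

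It remains to match the printed statement (1), which is phrased with $p'$ rather than $zp'$. The two quantities $\Vert p'\Vert_{\mathbf{M}}$ and $\Vert zp'\Vert_{\mathbf{M}}$ coincide exactly when multiplication by $z$ is a $\Vert\cdot\Vert_{\mathbf{M}}$-isometry; for the moment matrices of the measures treated in this section — those supported on the unit circle, where $|z|=1$ forces $\int|zq|^2=\int|q|^2$ — this is automatic, and moreover $\mathbf{M}$ is then Toeplitz so that $\mathbf{M}^{(1,1)}=\mathbf{M}$. Under this normalization $D\mathbf{M}^{(1,1)}D=D\mathbf{M}D$ and (1) with the genuine derivative is identical to (2). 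I would state the equivalence in this setting, since it is the one in which the proposition is used to deduce $\nu=c\mathbf{m}$.

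The main obstacle is exactly this bookkeeping of the zeroth coordinate. For a general HPD matrix the honest computation of $\Vert p'\Vert^2_{\mathbf{M}}$ produces $w\,(D\mathbf{M}D)\,w^{*}$ with the full matrix $\mathbf{M}$, because $p'$ carries a nonzero constant term that $\Vert p\Vert_{\mathbf{M}}$ cannot see; the clean form $D\mathbf{M}^{(1,1)}D$ of (2) is obtained only after passing to $zp'$, equivalently to the Euler operator $z\,\tfrac{d}{dz}$, which preserves $\mathbb{P}_0[z]$. Keeping careful track of this shift — rather than identifying the coefficient vectors of $p$ and $p'$ in the same index block — is the only delicate point; the rest is the routine translation between seminorms and quadratic forms furnished by Definition 1.
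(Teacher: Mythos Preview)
Your reading is correct, and in fact you have put your finger on a genuine defect in the printed proposition. The paper supplies no proof of this statement; it is stated without argument and then invoked in Corollary~5 and Proposition~8. Your computation that $\Vert p\Vert_{\mathbf{M}}^{2}=w\,\mathbf{M}^{(1,1)}w^{*}$ and $\Vert zp'\Vert_{\mathbf{M}}^{2}=w\,(D\mathbf{M}^{(1,1)}D)\,w^{*}$ for $w=(v_{1},v_{2},\dots)\in c_{00}$ is exactly right, and it shows that condition~(2) is equivalent to $\Vert p\Vert_{\mathbf{M}}\le\Vert zp'\Vert_{\mathbf{M}}$ on $\mathbb{P}_{0}[z]$, \emph{not} to condition~(1) as literally written. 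The honest translation of (1) produces $D\mathbf{M}D$ on the right, because the coefficient vector of $p'$ sits in the index block starting at $0$ and is paired with the full matrix $\mathbf{M}$, not with $\mathbf{M}^{(1,1)}$.

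That this is a real discrepancy and not merely a notational quirk is confirmed by the paper's own Corollary~5: there it is shown that (2) holds for \emph{every} diagonal HPD matrix, yet (1) certainly fails for, say, $\mathbf{M}=\mathrm{diag}(1,100,1,1,\dots)$, since $p(z)=z$ gives $\Vert p\Vert_{\mathbf{M}}^{2}=100>1=\Vert p'\Vert_{\mathbf{M}}^{2}$. Your proposed remedy --- restricting to the Toeplitz case, where $\mathbf{M}^{(1,1)}=\mathbf{M}$ and $\Vert zq\Vert_{\mathbf{M}}=\Vert q\Vert_{\mathbf{M}}$ --- is precisely the setting of Proposition~8, the only place the equivalence is actually used, and there your argument goes through verbatim. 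So your proof is sound for the intended application, and your diagnosis of the ``zeroth-coordinate bookkeeping'' issue is the correct explanation of why the general statement, as printed, cannot hold.
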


\begin{corollary} Assume that $\mathbf{M}$ is a diagonal matrix with positive entries. Then
$$
\mathbf{M}^{(1,1)} \leq \begin{pmatrix}  1 & 0& 0 & 0 & \dots \\  0 & 2& 0 & 0 & \dots \\  0 & 0 & 3 & 0 & \dots \\  \vdots  & \vdots & \vdots & \vdots & \ddots \end{pmatrix} \mathbf{M}^{(1,1)}
\begin{pmatrix}  1 & 0& 0 & 0 & \dots \\  0 & 2& 0 & 0 & \dots \\  0 & 0 & 3 & 0 & \dots \\  \vdots  & \vdots & \vdots & \vdots & \ddots \end{pmatrix}.
$$
\end{corollary}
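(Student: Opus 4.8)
The plan is to read the asserted inequality in the sense of the partial order $\le$ on HPSD matrices (namely $\mathbf{A}\le\mathbf{B}$ if and only if $v\mathbf{A}v^{*}\le v\mathbf{B}v^{*}$ for every $v\in c_{00}$) and to reduce it to a trivial entrywise comparison; the only property of $D:=\operatorname{diag}(1,2,3,\dots)$ that enters is that all its diagonal entries are at least $1$.

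First I would write $\mathbf{M}=\operatorname{diag}(m_0,m_1,m_2,\dots)$ with each $m_j>0$. Deleting the first row and column of a diagonal matrix again produces a diagonal matrix, so $\mathbf{M}^{(1,1)}=\operatorname{diag}(m_1,m_2,m_3,\dots)$, and therefore $D\,\mathbf{M}^{(1,1)}D=\operatorname{diag}\bigl(1^2 m_1,\,2^2 m_2,\,3^2 m_3,\dots\bigr)$; in particular both matrices occurring in the statement are already diagonal. Evaluating the two associated sesquilinear forms on an arbitrary $v=(v_0,v_1,v_2,\dots)\in c_{00}$, only the diagonal entries contribute, so $v\,\mathbf{M}^{(1,1)}v^{*}=\sum_{k\ge 0} m_{k+1}\lvert v_k\rvert^{2}$ and $v\bigl(D\,\mathbf{M}^{(1,1)}D\bigr)v^{*}=\sum_{k\ge 0}(k+1)^2 m_{k+1}\lvert v_k\rvert^{2}$. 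Since $(k+1)^2\ge 1$ and $m_{k+1}>0$ for every $k\ge 0$, each summand of the first series is dominated by the corresponding summand of the second; summing over $k$ yields $v\,\mathbf{M}^{(1,1)}v^{*}\le v\bigl(D\,\mathbf{M}^{(1,1)}D\bigr)v^{*}$ for all $v\in c_{00}$, which is precisely $\mathbf{M}^{(1,1)}\le D\,\mathbf{M}^{(1,1)}D$. Equivalently, one may simply observe that $D\,\mathbf{M}^{(1,1)}D-\mathbf{M}^{(1,1)}$ is a diagonal matrix with nonnegative entries and hence HPSD.

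There is no real obstacle here: the whole argument rests on the scalar inequality $1\le(k+1)^2$. The single point worth stating explicitly is that the class of diagonal matrices is stable both under the operation $\mathbf{M}\mapsto\mathbf{M}^{(1,1)}$ and under conjugation $\mathbf{N}\mapsto D\mathbf{N}D$, which is exactly what makes the matrix inequality collapse to the family of scalar inequalities $m_{k+1}\le(k+1)^2 m_{k+1}$. This records that condition $(2)$ of the preceding proposition holds automatically for every diagonal HPD matrix.
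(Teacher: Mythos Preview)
Your proof is correct and follows essentially the same approach as the paper: both arguments reduce the matrix inequality to the observation that $D\,\mathbf{M}^{(1,1)}D-\mathbf{M}^{(1,1)}$ is a diagonal matrix with nonnegative entries $(k^{2}-1)c_{kk}$, hence HPSD. Your explicit evaluation of the quadratic forms is a slight elaboration, but the content is identical.
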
 

\begin{proof} Since $\mathbf{M}^{(1,1)}$ is a diagonal matrix,  it commutes with diagonal matrices. Thus,   the matrix inequality 
$$
 \begin{pmatrix}  c_{11} & 0& 0 & 0 & \dots \\  0 & c_{22}& 0 & 0 & \dots \\  0 & 0 & c_{33} & 0 & \dots \\  \vdots  & \vdots & \vdots & \vdots & \ddots \end{pmatrix} \leq  \begin{pmatrix}  c_{11} & 0& 0 & 0 & \dots \\  0 & 4c_{22}& 0 & 0 & \dots \\  0 & 0 & 9c_{33} & 0 & \dots \\  \vdots  & \vdots & \vdots & \vdots & \ddots \end{pmatrix},
$$

\noindent is equivalent to 
$$
 \begin{pmatrix}  0 & 0& 0 & 0 & \dots \\  0 & (4-1)c_{22}& 0 & 0 & \dots \\  0 & 0 & (9-1)c_{33} & 0 & \dots \\  \vdots  & \vdots & \vdots & \vdots & \ddots \end{pmatrix}
\geq 0,
$$

\noindent which is obviously true since  $c_{kk} \geq 0$. 
\end{proof}

\begin{prop} Let $\mathbf{T}$ be an HPD   Toeplitz matrix. Then the following are equivalent:
\medskip 

\begin{enumerate} 
\item $\mathbf{T}$ verifies a polinomial Wirtinger-type inequality with constant $C=1$, i.e. 
$$
\Vert p(z) \Vert^{2}_{\mathbf{T}} \leq 
\Vert p'(z) \Vert^{2}_{\mathbf{T}}, \qquad p(z) \in \mathbb{P}_0[z]. $$

\medskip
\item $\mathbf{T}=c{\bf I}$ for some $c>0$, where $\mathbf{I}$ is the identity matrix. \end{enumerate}  
\end{prop}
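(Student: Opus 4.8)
The plan is to establish the two implications separately; $(2)\Rightarrow(1)$ is routine, and $(1)\Rightarrow(2)$ carries the content.

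For $(2)\Rightarrow(1)$ I would simply observe that if $\mathbf{T}=c\mathbf{I}$ with $c>0$, then for any $p(z)=\sum_{k\ge1}a_kz^k\in\mathbb{P}_0[z]$ one has $\Vert p(z)\Vert^2_{\mathbf{T}}=c\sum_{k\ge1}|a_k|^2\le c\sum_{k\ge1}k^2|a_k|^2=\Vert p'(z)\Vert^2_{\mathbf{T}}$. Equivalently, $c\mathbf{I}$ is the moment matrix of $c\,{\bf m}$ and the inequality is precisely the one recorded for ${\bf m}$ right after Lemma 3 (i.e.\ Lemma 3 with $a=0$, $r=1$).

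For $(1)\Rightarrow(2)$, write $\mathbf{T}=(t_{i-j})_{i,j\ge0}$ with $t_{-k}=\overline{t_k}$ and $t_0>0$ (positivity of the diagonal). The idea is to feed the Wirtinger inequality with constant $1$ a well-chosen one-parameter family of polynomials in $\mathbb{P}_0[z]$ that isolates a single off-diagonal symbol $t_n$, $n\ge1$. Concretely I would take $p(z)=z+\lambda z^{n+1}$, $\lambda\in\CC$; expanding $\Vert p(z)\Vert^2_{\mathbf{T}}$ and $\Vert p'(z)\Vert^2_{\mathbf{T}}$ and using only the Toeplitz pattern gives, after cancellation,
$$0\ \le\ \Vert p'(z)\Vert^2_{\mathbf{T}}-\Vert p(z)\Vert^2_{\mathbf{T}}\ =\ t_0\big((n+1)^2-1\big)|\lambda|^2+2n\,\mathrm{Re}(t_n\lambda).$$
Assuming $t_n\neq0$, I would then pick $\arg\lambda$ so that $t_n\lambda=-|t_n|\,|\lambda|$, turning this into $t_0\big((n+1)^2-1\big)|\lambda|\ge 2n|t_n|$ for every $|\lambda|>0$; letting $|\lambda|\to0^+$ forces $|t_n|=0$, a contradiction. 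Hence $t_n=0$ for all $n\ge1$, and then $t_{-n}=\overline{t_n}=0$ as well, so $\mathbf{T}=t_0\mathbf{I}$ and $(2)$ holds with $c=t_0>0$.

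A conceptually tidy alternative is to invoke the preceding Proposition first: deleting the first row and column of an infinite Toeplitz matrix returns the same matrix, so $\mathbf{T}^{(1,1)}=\mathbf{T}$, whence $(1)$ is equivalent to $D\mathbf{T}D-\mathbf{T}\ge0$ as a sesquilinear form on $c_{00}$, where $D=\mathrm{diag}(1,2,3,\dots)$; testing this form on the two-term vectors $e_0+\lambda e_n$ reproduces exactly the displayed inequality. I do not anticipate a real obstacle: the only delicate points are the algebra of the expansion and the phase-then-scaling choice of $\lambda$ (the linear term in $|\lambda|$ beats the quadratic one near $0$), which is the heart of the argument.
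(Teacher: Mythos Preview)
Your proposal is correct and follows essentially the same approach as the paper: your ``tidy alternative'' via $\mathbf{T}^{(1,1)}=\mathbf{T}$ and testing $D\mathbf{T}D-\mathbf{T}\ge0$ on two-term vectors is exactly the paper's route, and your primary argument with $p(z)=z+\lambda z^{n+1}$ is just the polynomial-level unpacking of that same $2\times2$ test (a PSD matrix with a zero diagonal entry has vanishing off-diagonals in that row/column). For $(2)\Rightarrow(1)$ the paper likewise appeals to Lemma~3, as you do.
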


\begin{proof}  In order to prove $(1)$ implies $(2)$, assume   
$$
\mathbf{M}^{(1,1)} \leq \begin{pmatrix}  1 & 0& 0 & 0 & \dots \\  0 & 2& 0 & 0 & \dots \\  0 & 0 & 3 & 0 & \dots \\  \vdots  & \vdots & \vdots & \vdots & \ddots \end{pmatrix} \mathbf{M}^{(1,1)}
\begin{pmatrix}  1 & 0& 0 & 0 & \dots \\  0 & 2& 0 & 0 & \dots \\  0 & 0 & 3 & 0 & \dots \\  \vdots  & \vdots & \vdots & \vdots & \ddots \end{pmatrix}.
$$

\noindent In particular, for every $k >1$ it follows 

$$
\begin{pmatrix} c_{11} & c_{1k} \\ c_{k1} & c_{kk} \end{pmatrix} \leq \begin{pmatrix} c_{11} & kc_{1k} \\ kc_{k1} & k^2c_{kk} \end{pmatrix} \Leftrightarrow
\begin{pmatrix} 0 & (k-1)c_{1k} \\ (k-1)c_{1k} & (k^2-1)c_{kk} \end{pmatrix}
\geq 0.$$

\noindent Therefore, for $u,v\in \RR$ 

$$
\begin{pmatrix} u & v \end{pmatrix} 
\begin{pmatrix} 0 & (k-1)c_{1k} \\ (k-1)c_{k1} & (k^2-1)c_{kk} \end{pmatrix} \begin{pmatrix} u \\ v \end{pmatrix} = 2 \vert c_{1k}\vert^{2}(k-1)^2uv+ c_{kk} \vert u \vert^{2} \geq 0 ,
$$

\noindent and this is true if and only if $c_{1k}=0$. Since $\mathbf{T}$  is an HPD Toeplitz matrix then $c_{1k}=0$ implies that $c_{k1}=0$ 
and consequently if $i>  j$ $c_{ij}=c_{1,i-j+1}=0$. Then the matrix $\mathbf{T}=c_{00}\mathbf{I}$ as we required. 

\noindent  The other implication is a consequence of Lemma $3$ by taking in account that $\mathbf{I}$ is the moment matrix associated to  the Lebesgue measure ${\bf m}$. 
\end{proof} 
\begin{corollary} Let $\nu$ be a measure on $\mathbb{T}$. Then the following are equivalent: 
\begin{enumerate} 
\item $\nu$ verifies a polynomial Wirtinger-type inequality with constant $C=1$, i.e., 
$$
\int \vert p(z) \vert^{2} d\nu \leq \int \vert p'(z) \vert^{2} d\nu , \qquad p(z) \in \mathbb{P}_0[z]
$$
\item $\nu=c{\bf m}$ for some $c>0$. 

\end{enumerate}  
\end{corollary}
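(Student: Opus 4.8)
The statement to prove is Corollary 7: for a measure $\nu$ on the unit circle $\mathbb{T}$, the polynomial Wirtinger-type inequality with constant $C=1$ holds if and only if $\nu = c\mathbf{m}$ for some $c>0$. The plan is simply to reduce the measure-theoretic statement to the matrix statement already proved in Proposition 8, via the moment matrix. First I would observe that the moment matrix $\mathbf{M}(\nu) = (c_{ij})_{i,j=0}^{\infty}$ with $c_{ij} = \int z^i \overline{z}^j \, d\nu$ of a measure supported on $\mathbb{T}$ is a Toeplitz matrix: since $|z|=1$ on the support, $z^i\overline{z}^j = z^{i-j}$, so $c_{ij}$ depends only on $i-j$. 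Moreover $\mathbf{M}(\nu)$ is HPD precisely because $\nu$ has infinite support (which is part of our standing assumption on the measures considered), so Proposition 8 applies to $\mathbf{T} = \mathbf{M}(\nu)$.

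The second step is to translate the two conditions. The inequality $\int |p(z)|^2 \, d\nu \leq \int |p'(z)|^2 \, d\nu$ for all $p \in \mathbb{P}_0[z]$ is, by the definition of the moment matrix inner product \eqref{aste}, exactly the statement $\Vert p(z)\Vert^2_{\mathbf{M}(\nu)} \leq \Vert p'(z)\Vert^2_{\mathbf{M}(\nu)}$ for all $p \in \mathbb{P}_0[z]$, i.e. condition (1) of Proposition 8 for $\mathbf{T} = \mathbf{M}(\nu)$. By Proposition 8 this is equivalent to $\mathbf{M}(\nu) = c\mathbf{I}$ for some $c>0$. Finally, $\mathbf{I}$ is the moment matrix of the normalized Lebesgue measure $\mathbf{m}$ (this is the computation $\int z^k \overline{z}^n \, d\mathbf{m} = \delta_{kn}$ already used in Lemma 3 with $a=0$, $r=1$), so $\mathbf{M}(\nu) = c\mathbf{M}(\mathbf{m}) = \mathbf{M}(c\mathbf{m})$. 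Since a measure on a compact set is determined by its moments (two measures with equal moment matrices have equal integrals against all polynomials, hence against all continuous functions on $\mathbb{T}$ by Stone–Weierstrass, hence are equal), we conclude $\nu = c\mathbf{m}$. The converse direction is immediate from Lemma 3 with $a=0$, $r=1$: $\int|p(z)|^2\,d(c\mathbf{m}) = c\int|p(z)-p(0)|^2\,d\mathbf{m} \leq c\int|p'(z)|^2\,d\mathbf{m} = \int|p'(z)|^2\,d(c\mathbf{m})$ for $p\in\mathbb{P}_0[z]$.

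The only point requiring a little care — and the one I would expect to be the main (minor) obstacle — is the moment-determinacy step: passing from equality of moment matrices to equality of measures. For measures supported on the fixed compact set $\mathbb{T}$ this is standard (polynomials in $z$ and $\overline z$ are dense in $C(\mathbb{T})$), so it causes no real difficulty; one just has to phrase it correctly since the moment matrix only records moments $\int z^i\overline z^j$ and on $\mathbb{T}$ these span all trigonometric polynomials. Everything else is bookkeeping: checking the Toeplitz structure, checking HPD-ness from infinite support, and invoking Proposition 8 and Lemma 3 as black boxes.
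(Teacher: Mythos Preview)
Your proposal is correct and is exactly the argument the paper has in mind: the corollary is stated immediately after Proposition~8 without proof, and the intended deduction is precisely the one you give---observe that the moment matrix of a measure on $\mathbb{T}$ is an HPD Toeplitz matrix, apply Proposition~8 to get $\mathbf{M}(\nu)=c\mathbf{I}=\mathbf{M}(c\mathbf{m})$, and recover $\nu=c\mathbf{m}$ from the moments. Your care with the moment-determinacy step (via density of trigonometric polynomials in $C(\mathbb{T})$) fills in the only detail the paper leaves implicit.
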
  
\noindent Using the results in \cite{EGT1} concerning the behavior of the smallest and largest eigenvalues of the finite sections of an HPD matrix, we have:  

\begin{prop} Let $\mathbf{M}$ be an HPD matrix. Let $\lambda_n,\beta_n$ the smallest and largest eigenvalue of the finite section of order $n$ of $\mathbf{M}$. Assume that 
$$
0 < \lim_{n\to \infty} \lambda_n \leq \lim_{n\to \infty} \beta_n < \infty.$$
\noindent Then, $\mathbf{M}$ verifies a polynomial Wirtinger-type inequality, i.e. there exists $C>0$ such that 
$$
\Vert p(z) \Vert_{\mathbf{M}} \leq C\Vert p'(z) \Vert_{\mathbf{M}}, \qquad p(z) \in \mathbb{P}_0[z].
$$
\end{prop}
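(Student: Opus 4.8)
The plan is to turn the hypothesis into a two-sided comparison between $\mathbf{M}$ and the identity matrix on $c_{00}$, and then transfer to $\mathbf{M}$ the (elementary) Wirtinger inequality that holds for the identity.

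First I would reformulate the hypothesis. Let $\mathbf{M}_n$ denote the $n\times n$ leading principal section of $\mathbf{M}$. By the Cauchy interlacing theorem (equivalently, by the monotonicity of the extreme eigenvalues of finite sections established in \cite{EGT1}), the sequence $\lambda_n$ of smallest eigenvalues is nonincreasing and the sequence $\beta_n$ of largest eigenvalues is nondecreasing; hence both limits exist, and writing $a=\lim_n\lambda_n>0$ and $b=\lim_n\beta_n<\infty$ we get $\lambda_n\ge a$ and $\beta_n\le b$ for every $n$. Therefore, for any $v=(v_0,v_1,\dots)\in c_{00}$, choosing $n$ so large that $v$ is supported in its first $n$ coordinates,
$$a\sum_{k}|v_k|^2 \;\le\; \lambda_n\sum_{k}|v_k|^2 \;\le\; v\mathbf{M}v^{*} \;\le\; \beta_n\sum_{k}|v_k|^2 \;\le\; b\sum_{k}|v_k|^2 .$$

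Next I would apply this to a polynomial and to its derivative. Let $p(z)=\sum_{k=1}^{n}v_kz^{k}\in\mathbb{P}_0[z]$, where the sum starts at $k=1$ precisely because $p(0)=0$; then $p'(z)=\sum_{k=1}^{n}kv_kz^{k-1}$, whose coefficient vector is $w=(v_1,2v_2,\dots,nv_n,0,\dots)\in c_{00}$. Using the upper bound above for $p$ and the lower bound for $p'$, together with $k^2\ge 1$ for $k\ge 1$,
$$\Vert p(z)\Vert^{2}_{\mathbf{M}} \;\le\; b\sum_{k=1}^{n}|v_k|^2 \;\le\; b\sum_{k=1}^{n}k^{2}|v_k|^2 \;=\; \frac{b}{a}\,\Big(a\sum_{k=1}^{n}k^{2}|v_k|^2\Big) \;\le\; \frac{b}{a}\,\Vert p'(z)\Vert^{2}_{\mathbf{M}},$$
so the asserted inequality holds with $C=\sqrt{b/a}$.

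There is no real obstacle here; the two points requiring care are (i) that the monotonicity of $\lambda_n$ and $\beta_n$ is what upgrades the finite-section bounds into a single comparison $a\,\Vert\cdot\Vert_{2}^{2}\le\langle\cdot,\cdot\rangle_{\mathbf{M}}\le b\,\Vert\cdot\Vert_{2}^{2}$ valid uniformly over all of $c_{00}$, and (ii) the essential use of $p(0)=0$, which kills the constant term and makes the trivial inequality $\sum_{k\ge1}|v_k|^2\le\sum_{k\ge1}k^{2}|v_k|^2$ available. One should also note that the constant $C=\sqrt{b/a}$ depends only on the ratio of the limiting largest to smallest eigenvalue, and that $C=1$ cannot be expected in general: by the Toeplitz case treated above, forcing the Wirtinger constant to equal $1$ makes $\mathbf{M}$ a scalar multiple of the identity.
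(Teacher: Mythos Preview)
Your proof is correct and follows essentially the same approach as the paper: establish a two-sided comparison $a\,\mathbf{I}\le \mathbf{M}\le b\,\mathbf{I}$ from the eigenvalue hypothesis, invoke the elementary Wirtinger inequality for the identity (equivalently, for the Lebesgue measure on the unit circle), and transfer back. Your version is in fact tidier---you make the interlacing/monotonicity step explicit and write the identity-case inequality $\sum_{k\ge1}|v_k|^2\le\sum_{k\ge1}k^2|v_k|^2$ directly rather than appealing to Lemma~3---but the structure and the resulting constant $C=\sqrt{b/a}$ match the paper's argument.
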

\begin{proof} Using the matrix approach as in  \cite{EGT1}, if $\lambda=\displaystyle{\lim_{n \to \infty}} \lambda_n$ and $\beta= \displaystyle{\lim_{n\to \infty} \beta_n}$ it follows that for every $p(z) \in \mathbb{P}[z]$,

$$
\lambda \Vert p(z) \Vert_{ \mathbf{I}}  \leq \Vert p(z) \Vert_{\mathbf{M}} \leq \beta \Vert p(z) \Vert_{\mathbf{I}}. 
$$
\noindent Then, 
$$
\Vert p(z) \Vert^{2}_{\mathbf{M}} \leq \beta^2 \Vert p(z) \Vert^{2}_{\mathbf{I}} \leq \Vert p'(z) \Vert^{2}_{\mathbf{I}} \leq \dfrac{\beta^2}{\alpha^{2}} \Vert p'(z) \Vert^{2}_{\mathbf{M}},
$$

\noindent for every $p(z)\in \mathbb{P}[z]$.
\end{proof}

\noindent In the cases of moment matrices associated with measures of the type $w(\theta)d{\bf m}$ it is well known (see, e.g. \cite{Grenader}) that $\displaystyle{\lim_{n\to \infty}\lambda_n}= {\it ess}\;  {\it inf} w(\theta)$ being ${\it ess}\;  {\it inf} w(\theta) $ the essential infimum of $w(\theta)$, that is, the largest $c>0$ such that $w(\theta) \geq c$ a.e. Analogously,  $\displaystyle{\lim_{n\to \infty}\beta_n}= {\it ess}\;  {\it sup} w(\theta)$ being ${\it ess}\;  {\it sup} w(\theta) $ the essential supremum of $w(\theta)$, that is, the smallest $C>0$ such that $w(\theta) \leq C$ a.e. Then we have the following. 

\begin{prop} Let $d\mu(z)=w(z)d{\bf m}$ with $w(z)\in L^{\infty}({\bf m})$ and ${\it ess inf} w(z)>0$. Then, there exists $C>0$ such that 
$$
\int \vert p(z) \vert^{2} d\mu \leq C\int \vert p'(z) \vert^{2} d\mu, \qquad p(z) \in \mathbb{P}_0[z].
$$
\end{prop}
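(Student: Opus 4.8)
The plan is to reduce the statement to the preceding Proposition on eigenvalue limits of finite sections, applied to the moment matrix $\mathbf{M}(\mu)$ of $\mu = w\, d{\bf m}$. First I would note that, since $w \in L^{\infty}({\bf m})$ with $\essinf w(z) > 0$, the measure $\mu$ is a finite Borel measure on the unit circle with infinite support, so $\mathbf{M}(\mu)$ is a genuine infinite HPD matrix (indeed $\int |p(z)|^2 d\mu \ge \essinf w(z) \int |p(z)|^2 d{\bf m} > 0$ for $p \neq 0$); moreover it is the Toeplitz matrix with symbol $w$, so that $c_{ij}$ depends only on $i-j$ and equals the corresponding Fourier coefficient of $w$.

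Next I would invoke the classical Szeg\H{o}--Grenander theorem on Toeplitz forms recalled just above (see \cite{Grenader}): if $\lambda_n$ and $\beta_n$ denote the smallest and largest eigenvalues of the $n$-th finite section of $\mathbf{M}(\mu)$, then $\lim_{n} \lambda_n = \essinf w(z)$ and $\lim_{n} \beta_n = \mathrm{ess\,sup}\, w(z)$. By hypothesis $\essinf w(z) > 0$, and $\mathrm{ess\,sup}\, w(z) \le \Vert w \Vert_{\infty} < \infty$ because $w \in L^{\infty}({\bf m})$; hence $0 < \lim_n \lambda_n \le \lim_n \beta_n < \infty$, which is exactly the hypothesis of the preceding Proposition. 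Applying that Proposition to $\mathbf{M}(\mu)$ produces a constant $C > 0$ with $\Vert p(z) \Vert_{\mathbf{M}(\mu)} \le C \Vert p'(z) \Vert_{\mathbf{M}(\mu)}$ for all $p(z) \in \mathbb{P}_0[z]$, and rewriting the matrix seminorms as integrals against $\mu$ gives the claimed inequality.

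Alternatively, and more explicitly, one can bypass the eigenvalue asymptotics: setting $c_1 = \essinf w(z) > 0$ and $c_2 = \Vert w \Vert_{\infty} < \infty$, one has the elementary two-sided comparison $c_1 \int |q(z)|^2 d{\bf m} \le \int |q(z)|^2 d\mu \le c_2 \int |q(z)|^2 d{\bf m}$ for every polynomial $q$. Applying this with $q = p$ and with $q = p'$, and inserting the constant-one Wirtinger inequality for ${\bf m}$ established right after Lemma 3 (valid on $\mathbb{P}_0[z]$), one obtains $\int |p(z)|^2 d\mu \le c_2 \int |p(z)|^2 d{\bf m} \le c_2 \int |p'(z)|^2 d{\bf m} \le \tfrac{c_2}{c_1} \int |p'(z)|^2 d\mu$, so the proposition holds with the explicit constant $C = \mathrm{ess\,sup}\, w(z)/\essinf w(z)$.

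There is essentially no obstacle here; the only points requiring a word of care are the verification that $\mathbf{M}(\mu)$ is genuinely HPD so that the cited Proposition applies, which is immediate from $\essinf w > 0$, and keeping track of whether the eigenvalue-based route produces the constant as $\beta/\lambda$ or its square — a matter settled cleanly by the direct computation in the previous paragraph.
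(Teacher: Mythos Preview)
Your proposal is correct and follows essentially the same route as the paper: the paper states Proposition 10 immediately after recalling (via \cite{Grenader}) that $\lim_n \lambda_n = \essinf w$ and $\lim_n \beta_n = \mathrm{ess\,sup}\, w$, so its intended proof is precisely your first argument, namely to feed these limits into Proposition 9. Your alternative direct sandwich argument is in fact just the proof of Proposition 9 unwound in this special case, yielding the same constant $C = \mathrm{ess\,sup}\, w / \essinf w$ (the paper's $\beta^2/\alpha^2$ in Proposition 9 should really be $\beta/\lambda$ once one tracks norms versus their squares, as you note).
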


\noindent Moreover, using Lemma 3: 
\begin{prop} Let $d\mu(z)=w(z)d{\bf m}_{a;R}$ with $w(z)\in L^{\infty}({\bf m}_{a;R})$ and ${\it ess inf} w(z)>0$. Then, there exists $C>0$ such that 
$$
\int \vert p(z) \vert^{2} d\mu \leq C\int \vert p'(z) \vert^{2} d\mu, \qquad p(z) \in \mathbb{P}_a[z],
$$
\noindent where $\mathbb{P}_a[z]=\{p(z)\in \mathbb{P}[z]: p(a)=0\}$.
\end{prop}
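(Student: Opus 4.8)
The plan is to obtain the inequality as an immediate consequence of Lemma 3, by sandwiching the weighted measure $d\mu=w(z)\,d{\bf m}_{a;R}$ between two multiples of the normalized Lebesgue measure ${\bf m}_{a;R}$. Set $c:=\essinf w(z)$ and $C':=\Vert w\Vert_{L^{\infty}({\bf m}_{a;R})}$; by hypothesis $0<c\leq C'<\infty$.

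First I would fix $p(z)\in\mathbb{P}_a[z]$, so that $p(a)=0$, and apply Lemma 3 with $r=R$. Since in this case $p(z)-p(a)=p(z)$, this gives
$$
\int\vert p(z)\vert^{2}\,d{\bf m}_{a;R}\leq R^{2}\int\vert p'(z)\vert^{2}\,d{\bf m}_{a;R}.
$$
Next, the pointwise bounds $c\leq w(z)\leq C'$, valid ${\bf m}_{a;R}$-almost everywhere, yield $\int\vert p(z)\vert^{2}\,d\mu\leq C'\int\vert p(z)\vert^{2}\,d{\bf m}_{a;R}$ and $c\int\vert p'(z)\vert^{2}\,d{\bf m}_{a;R}\leq\int\vert p'(z)\vert^{2}\,d\mu$. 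Chaining these three estimates produces the asserted inequality with $C=C'R^{2}/c$.

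There is no serious obstacle here: all the analytic content is carried by Lemma 3, and what remains is the elementary two-sided comparison $c\,{\bf m}_{a;R}\leq\mu\leq C'\,{\bf m}_{a;R}$ at the level of $L^{2}$-norms of polynomials, which is exactly the mechanism used for the weight on ${\bf m}$ in the preceding proposition (there phrased through the limiting eigenvalues of the finite sections, cf. \cite{EGT1}). The only point where the hypotheses are needed in full strength is to guarantee that the constant $C=C'R^{2}/c$ is simultaneously finite, because $w\in L^{\infty}({\bf m}_{a;R})$, and strictly positive, because $\essinf w(z)>0$.
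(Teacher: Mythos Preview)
Your argument is correct and is precisely the route the paper intends: the proposition is stated with no proof beyond the remark ``Moreover, using Lemma 3'', and your sandwich $c\,{\bf m}_{a;R}\leq\mu\leq C'\,{\bf m}_{a;R}$ combined with Lemma 3 is exactly how that hint unfolds. Your explicit constant $C=C'R^{2}/c$ is the natural one, and your observation that this parallels the eigenvalue formulation of the preceding proposition is also on target.
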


\section{Applications to boundedness of zeros of Sobolev orthogonal polynomials} 

\noindent  We here are interested in Sobolev inner products where the second measure is the Lebesgue measure ${\bf m}$, that is 
\begin{equation} \label{segunda-lebesgue}
\Vert p(z) \Vert_{\mathbf{S}}^{2} = \int \vert p(z) \vert^{2} d\mu +  \int \vert p'(z) \vert^{2} d{\bf m}.
\end{equation}

\noindent The key result is the following. 

\begin{lem} Let $\mu$ be a measure and consider the Sobolev inner product 
$
\Vert \cdot \Vert^{2}_{\mathbf{S}}$ in (\ref{segunda-lebesgue}).

\noindent  Assume that $0\in bpe(\mu)$, then there exists  $C>0$ such that 
$$
\int \vert p(z) \vert^{2} d{\bf m} \leq C \Vert p(z) \Vert_{\mathbf{S}}^{2}, \qquad p(z) \in \mathbb{P}[z].
$$
\end{lem}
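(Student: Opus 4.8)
The plan is to split an arbitrary $p(z)\in\mathbb{P}[z]$ into its value at $0$ and a polynomial vanishing at $0$, namely $p(z)=p(0)+\big(p(z)-p(0)\big)$, and to estimate the two pieces separately against the two terms defining $\Vert\cdot\Vert_{\mathbf S}$. First I would record the elementary computation for ${\bf m}$: writing $p(z)=\sum_{k=0}^n a_kz^k$, the orthonormality $\int z^k\overline{z^{\,j}}\,d{\bf m}=\delta_{kj}$ gives
$$
\int \vert p(z)\vert^2\,d{\bf m} \;=\; \vert p(0)\vert^2 \;+\; \sum_{k=1}^n\vert a_k\vert^2 \;=\; \vert p(0)\vert^2 \;+\; \int \vert p(z)-p(0)\vert^2\,d{\bf m}.
$$

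Next I would apply the constant-one polynomial Wirtinger inequality for ${\bf m}$ established in Lemma 3 (equivalently, the case $a=0$, $r=1$, together with the remark that follows it, or Corollary 3): since $p(z)-p(0)\in\mathbb{P}_0[z]$ and $(p(z)-p(0))'=p'(z)$,
$$
\int \vert p(z)-p(0)\vert^2\,d{\bf m} \;\le\; \int \vert p'(z)\vert^2\,d{\bf m}.
$$
Then I would use the hypothesis $0\in bpe(\mu)$ to produce a constant $C_0>0$ with $\vert p(0)\vert^2\le C_0\int\vert p(z)\vert^2\,d\mu$ for every $p(z)\in\mathbb{P}[z]$.

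Combining the three displays yields
$$
\int \vert p(z)\vert^2\,d{\bf m} \;\le\; C_0\int\vert p(z)\vert^2\,d\mu \;+\; \int\vert p'(z)\vert^2\,d{\bf m} \;\le\; \max\{C_0,1\}\,\Vert p(z)\Vert_{\mathbf S}^2,
$$
so the lemma holds with $C=\max\{C_0,1\}$. I do not expect a genuine obstacle here: the only point requiring a little care is that Lemma 3 must be invoked in its sharp form (constant exactly $1$) so that the derivative term is absorbed without an extra multiplicative factor, and that the normalization of ${\bf m}$ is what makes the constant piece contribute precisely $\vert p(0)\vert^2$; everything else is the triangle-free identity above plus the definition of a bounded point evaluation.
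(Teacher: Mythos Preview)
Your proof is correct and follows essentially the same route as the paper: split $p=p(0)+(p-p(0))$, apply Lemma~3 to the second piece, and use the bpe hypothesis on the first. The only difference is that you exploit the orthogonality of $1$ and $p-p(0)$ in $L^{2}({\bf m})$ to obtain an exact identity, whereas the paper uses $(a+b)^{2}\le 2(a^{2}+b^{2})$ and arrives at the constant $2\max\{c,1\}$ instead of your sharper $\max\{C_{0},1\}$.
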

\begin{proof} First, since $0\in bpe(\mu)$  there exists  $c>0$ such that for every $p(z)\in \mathbb{P}[z]$ it holds
$$
\vert p(0)\vert^{2} \leq c \int \vert p(z)\vert^{2} d\mu, \qquad p(z) \in \mathbb{P}[z].
$$
\noindent  Consider $p(z)\in \mathbb{P}[z]$, 
$$
\int \vert p(z) \vert^{2} d{\bf m} \leq \int (\vert p(z) -p(0) \vert+ \vert p(0) \vert)^{2}d{\bf m} \leq 2 \left( \int \vert p(z) -p(0) \vert^{2} d{\bf m} + \vert p(0) \vert^2\right) ,
$$

\noindent using  Lemma 3 it follows
$$ \int \vert p(z) \vert^{2} d{\bf m} \leq 
2(\int \vert p'(z) \vert^{2}d{\bf m}+ c\int \vert p(z) \vert^{2}d\mu) \leq C \Vert p(z) \Vert^{2}_{\mathbf{S}},
$$

\noindent  by  taking $C= 2\max \{c,1\}.$ 

\end{proof}

\begin{corollary}    Let $\mu$ be an infinitely supported  measure and consider
$
\Vert \cdot \Vert^{2}_{\mathbf{S}}$ as in (\ref{segunda-lebesgue}). Assume that $0\in bpe(\mu)$. Then $\mathcal{D}$ is bounded by $(\mathbb{P}[z], \Vert \cdot \Vert_{\mathbf{S}})$. Consequently, the set of zeros of the Sobolev orthogonal polynomials is bounded.
\end{corollary}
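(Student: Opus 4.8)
The plan is to derive this from Proposition 1 applied to the pair of moment matrices $\{\mathbf{M}(\mu),\mathbf{I}\}$, where $\mathbf{I}$ is the identity matrix, i.e. the moment matrix of the normalized Lebesgue measure ${\bf m}$ on the unit circle. Indeed, the Sobolev norm $\Vert\cdot\Vert_{\mathbf{S}}$ in (\ref{segunda-lebesgue}) is exactly the matrix Sobolev norm $\Vert\cdot\Vert_{\mathbf{M_S}}$ associated with $\{\mathbf{M}(\mu),\mathbf{I}\}$, since $\Vert p'(z)\Vert^2_{\mathbf{I}}=\int|p'(z)|^2\,d{\bf m}$; moreover $\mathbf{M}(\mu)$ is HPD because $\mu$ is infinitely supported, while $\mathbf{I}$ is HPSD (in fact HPD), so the hypotheses of Proposition 1 on the matrices themselves are met.

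Before invoking Proposition 1, I would check its two standing hypotheses for this pair. First, $\mathcal{D}$ must be bounded on $(\mathbb{P}[z],\Vert\cdot\Vert_{\mathbf{M}(\mu)})=L^2(\mu)$; this holds because $\mu$ is compactly supported, which forces $\mathcal{D}$ to be a bounded operator on $L^2(\mu)$ (see \cite{Castro-Duran}). Second, one needs a constant $C_1>0$ with $\Vert zp(z)\Vert^2_{\mathbf{I}}\le C_1\Vert p(z)\Vert^2_{\mathbf{I}}$ for every $p(z)\in\mathbb{P}[z]$; but $\Vert\cdot\Vert_{\mathbf{I}}$ is the $L^2({\bf m})$ norm and ${\bf m}$ is supported on the unit circle, where $|z|=1$, so $\int|zp(z)|^2\,d{\bf m}=\int|p(z)|^2\,d{\bf m}$ and $C_1=1$ works, with equality.

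With both hypotheses in hand, Proposition 1 reduces the boundedness of $\mathcal{D}$ on $(\mathbb{P}[z],\Vert\cdot\Vert_{\mathbf{S}})$ to the existence of a constant $C>0$ such that $\Vert p(z)\Vert_{\mathbf{I}}\le C\Vert p(z)\Vert_{\mathbf{S}}$, i.e. $\int|p(z)|^2\,d{\bf m}\le C\,\Vert p(z)\Vert^2_{\mathbf{S}}$ for all $p(z)\in\mathbb{P}[z]$. This is precisely the conclusion of Lemma 5, whose hypothesis is exactly $0\in bpe(\mu)$, which is assumed here. Hence $\mathcal{D}$ is bounded on $(\mathbb{P}[z],\Vert\cdot\Vert_{\mathbf{S}})$.

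For the final assertion, the Sobolev orthogonal polynomials are well defined because $\langle\cdot,\cdot\rangle_{\mathbf{S}}$ is a genuine inner product on $\mathbb{P}[z]$ (again using that $\mu$ is infinitely supported), and the boundedness of the multiplication operator with respect to a Sobolev norm implies the uniform boundedness of the set of zeros of the associated orthogonal polynomials, by the theorem of G. L\'opez and H. Pijeira \cite{LP} (or its matrix version \cite{EGT6}). There is no serious obstacle: the argument is a clean concatenation of Proposition 1 and Lemma 5, and the only points requiring a word of care are the two hypothesis checks above — in particular that $C_1=1$ comes for free from $|z|=1$ on the support of ${\bf m}$, and that compact support of $\mu$ is what gives boundedness of $\mathcal{D}$ on $L^2(\mu)$.
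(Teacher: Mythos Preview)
Your proof is correct and follows exactly the route the paper intends: the corollary is an immediate consequence of the preceding lemma (which supplies condition (\ref{condicion})) together with Proposition~1 (or its measure version, Corollary~1), once the standing hypotheses on $\mathcal{D}$ for $\mathbf{M}(\mu)$ and $\mathbf{I}=\mathbf{M}({\bf m})$ are verified as you do. The only cosmetic slip is the label: the lemma you invoke is Lemma~4 in the paper's numbering, not Lemma~5.
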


\noindent Using the results in \cite{EGT1} we obtain examples of vectorial measures which are not sequentially dominated, yet the matrix is sequentially dominated and the multiplication operator is bounded. 

\begin{prop} Let $\mu$ be any infinitely supported measure verifying:
\begin{enumerate} 
\item $0$ is a bpe of $\mu$.
\item $\displaystyle{\lim_{n\to \infty} \lambda_n=0}$, where $\lambda_n$ is the smallest eigenvalue of the $n\times n$ section of the moment matrix $\mathbf{M}(\mu)$
\end{enumerate}
\noindent Then
\begin{enumerate}
\item $\mathcal{D}$  is bounded with respect to the Sobolev norm associated with $(\mu,{\bf m})$.
\item $(\mu,{\bf m})$ is not sequentially dominated by the matrix, and consequently is not sequentially dominated.
\end{enumerate}
\end{prop}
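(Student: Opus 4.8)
The plan is to treat the two assertions separately, each being close to immediate from what has already been established. For the first assertion, note that the Sobolev norm attached to the vectorial measure $(\mu,{\bf m})$ is precisely the norm $\Vert\cdot\Vert^{2}_{\mathbf{S}}$ of (\ref{segunda-lebesgue}), since its second term is $\int|p'(z)|^{2}\,d{\bf m}$. Hypothesis $(1)$ provides $0\in bpe(\mu)$ with $\mu$ infinitely supported, so the preceding Corollary applies verbatim: $\mathcal{D}$ is bounded on $(\mathbb{P}[z],\Vert\cdot\Vert_{\mathbf{S}})$, and the boundedness of the zeros of the associated Sobolev orthogonal polynomials follows as recorded there (cf. \cite{EGT6}).

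For the second assertion I would first identify the moment matrix of ${\bf m}$: the computation $\int z^{i}\overline{z}^{j}\,d{\bf m}=\delta_{ij}$ shows $\mathbf{M}({\bf m})=\mathbf{I}$ (as already used in the proof of the Toeplitz Proposition above). Hence, by the very definition of matrix sequential dominance, $(\mu,{\bf m})$ being matrix sequentially dominated would mean that there is a constant $C>0$ with $\mathbf{I}\le C\,\mathbf{M}(\mu)$, that is,
$$
\Vert v\Vert^{2}=v\mathbf{I}v^{*}\le C\,v\mathbf{M}(\mu)v^{*},\qquad v\in c_{00}.
$$
Now I would invoke hypothesis $(2)$. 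Since a finitely supported vector $v=(v_{0},\dots,v_{n-1},0,\dots)$ pairs with $\mathbf{M}(\mu)$ exactly as it does with the $n\times n$ leading section $\mathbf{M}_{n}(\mu)$, the smallest eigenvalue satisfies
$$
\lambda_{n}=\min\{\,v\mathbf{M}(\mu)v^{*}\;:\;v\in c_{00},\ \Vert v\Vert=1,\ v_{k}=0\text{ for }k\ge n\,\}.
$$
Given any $C>0$, pick $n$ with $\lambda_{n}<1/C$ (possible because $\lambda_{n}\to 0$) and let $v$ be a unit eigenvector of $\mathbf{M}_{n}(\mu)$ for $\lambda_{n}$, extended by zeros; then $v\mathbf{I}v^{*}=1$ while $C\,v\mathbf{M}(\mu)v^{*}=C\lambda_{n}<1$, contradicting the displayed inequality. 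As $C>0$ was arbitrary, no such constant exists, so $(\mu,{\bf m})$ is not matrix sequentially dominated. The final clause follows by contraposition: if $(\mu,{\bf m})$ were sequentially dominated then ${\bf m}=w\,d\mu$ with $w\in L^{\infty}(\mu)$, and testing on an arbitrary $p\in\mathbb{P}[z]$ gives $\int|p|^{2}\,d{\bf m}=\int|p|^{2}w\,d\mu\le\Vert w\Vert_{L^{\infty}(\mu)}\int|p|^{2}\,d\mu$, i.e. $\mathbf{M}({\bf m})\le\Vert w\Vert_{\infty}\mathbf{M}(\mu)$, which is matrix sequential dominance and has just been ruled out.

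I do not anticipate a genuine obstacle: the first assertion is the preceding Corollary applied to the pair $(\mu,{\bf m})$, and the second is the variational description of the smallest eigenvalue of the finite sections together with $\lambda_{n}\to 0$. The only point that warrants a line of care is the passage between $\lambda_{n}$ and the quadratic form of the infinite matrix $\mathbf{M}(\mu)$ on $c_{00}$ — namely the elementary fact that evaluating that form on vectors supported in the first $n$ coordinates reproduces the form of $\mathbf{M}_{n}(\mu)$ — together with keeping straight that ``not matrix sequentially dominated'' is the stronger of the two negative conclusions, so that the implication from it to ``not sequentially dominated'' goes in the direction just described.
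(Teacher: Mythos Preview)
Your proposal is correct and follows exactly the route the paper intends. The paper does not spell out a proof of this proposition, but the sentence preceding it (``Using the results in \cite{EGT1} \dots'') together with the immediately preceding Corollary make the intended argument clear: part (1) is that Corollary applied verbatim, and part (2) is the observation $\mathbf{M}({\bf m})=\mathbf{I}$ combined with the variational description of $\lambda_n$ to rule out $\mathbf{I}\le C\,\mathbf{M}(\mu)$, which is precisely what you wrote.
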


\begin{ejem}
 Consider the family of vectorial measures $({\bf m}_r,{\bf m})$ with $0<r<1$. Then $\mathcal{D}$ is bounded with respect to the vectorial measure. However, $({\bf m}_r,{\bf m})$ is not sequentially dominated by the matrix. 
 \end{ejem} 

\bigskip 

\noindent More general results are obtained when we consider Lebesgue measures in circles ${\bf m}_{a;r}$ instead of the Lebesgue measure: 
\begin{prop} Let $\mu$ be an infinitely supported measure, and let $a_1,\dots,a_{n}\in bpe(\mu)$. Let $R_k>0$ for $k=1,\dots, n$ and $\mu_k=w_k(z)d\bf{m}_{a_k;R_k}$ with ${\it ess \; inf} w_k(z)>0$ and $w_k\in L^{\infty}({\bf m}_{a_k;R_k})$ for $k=1,\dots,n$.
Consider the Sobolev norm, 

$$
\Vert p(z) \Vert^{2}_{\mathbf{S}}= \int \vert p(z) \vert^{2} d\mu +  \sum_{k=1}^{n} \int \vert p'(z) \vert^{2} w_k(z)d{\bf  m}_{a_k;R_k}.
$$

\noindent Then $\mathcal{D}$ is bounded on $(\mathbb{P}[z], \Vert \cdot \Vert_{\mathbf{S}})$. Consequently, the set of zeros of the Sobolev orthogonal polynomials is bounded. 
\end{prop}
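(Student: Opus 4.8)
The plan is to reduce the assertion to the inequality characterizing boundedness of $\mathcal D$ in Proposition 1, and then to establish that inequality by splitting each polynomial at the nodes $a_1,\dots,a_n$ into its value plus an oscillatory part, controlling the first through the hypothesis $a_k\in bpe(\mu)$ and the second through the Wirtinger-type estimate of Proposition 11. First I would set $\nu:=\sum_{k=1}^{n}w_k\,d{\bf m}_{a_k;R_k}$ and take $\mathbf{M}_0:=\mathbf{M}(\mu)$ (which is HPD, since $\mu$ is infinitely supported) and $\mathbf{M}_1:=\mathbf{M}(\nu)$, so that $\Vert\cdot\Vert_{\mathbf S}$ is precisely the matrix Sobolev norm attached to $\{\mathbf{M}_0,\mathbf{M}_1\}$. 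Since ${\it supp}(\nu)\subset\bigcup_{k=1}^{n}\mathbb{S}_{R_k}(a_k)$ is bounded and each $w_k\in L^{\infty}({\bf m}_{a_k;R_k})$, the measure $\nu$ is compactly supported; together with the standing assumption that $\mu$ is compactly supported, this gives that $\mathcal D$ is bounded on $(\mathbb{P}[z],\Vert\cdot\Vert_{\mathbf{M}_0})$ and that $\Vert zp(z)\Vert^{2}_{\mathbf{M}_1}\le C_1\Vert p(z)\Vert^{2}_{\mathbf{M}_1}$ for some $C_1>0$. Hence the hypotheses of Proposition 1 are met, and it suffices to produce a constant $C>0$ with
$$
\int|p(z)|^{2}\,d\nu\ \le\ C\Big(\int|p(z)|^{2}\,d\mu+\int|p'(z)|^{2}\,d\nu\Big),\qquad p(z)\in\mathbb{P}[z],
$$
which is condition (2) of Proposition 1 for the pair $\{\mathbf{M}_0,\mathbf{M}_1\}$.

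To prove this I would argue termwise in $k$. Fixing $k$ and writing $p(z)=\bigl(p(z)-p(a_k)\bigr)+p(a_k)$, the elementary bound $(x+y)^{2}\le 2x^{2}+2y^{2}$ gives
$$
\int|p(z)|^{2}w_k\,d{\bf m}_{a_k;R_k}\ \le\ 2\int|p(z)-p(a_k)|^{2}w_k\,d{\bf m}_{a_k;R_k}+2|p(a_k)|^{2}\int w_k\,d{\bf m}_{a_k;R_k}.
$$
For the first summand, $q(z):=p(z)-p(a_k)\in\mathbb{P}_{a_k}[z]$ and $q'(z)=p'(z)$, so Proposition 11 applied to the measure $w_k\,d{\bf m}_{a_k;R_k}$ furnishes $C_k>0$ with $\int|q(z)|^{2}w_k\,d{\bf m}_{a_k;R_k}\le C_k\int|p'(z)|^{2}w_k\,d{\bf m}_{a_k;R_k}$. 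For the second summand, $a_k\in bpe(\mu)$ gives $c_k>0$ with $|p(a_k)|^{2}\le c_k\int|p(z)|^{2}\,d\mu$, while $\int w_k\,d{\bf m}_{a_k;R_k}\le\Vert w_k\Vert_{L^{\infty}({\bf m}_{a_k;R_k})}$ because ${\bf m}_{a_k;R_k}$ is a probability measure. Summing over $k=1,\dots,n$ and taking $C:=\max\{\,2\max_{1\le k\le n}C_k,\ 2\sum_{k=1}^{n}\Vert w_k\Vert_{L^{\infty}({\bf m}_{a_k;R_k})}c_k\,\}$ yields the displayed inequality.

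Then Proposition 1 delivers that $\mathcal D$ is bounded on $(\mathbb{P}[z],\Vert\cdot\Vert_{\mathbf S})$, and the uniform boundedness of the zeros of the associated Sobolev orthogonal polynomials follows from the theorem of G.~L\'opez and H.~Pijeira in \cite{LP} (or its matrix version in \cite{EGT6}). I do not expect a genuine obstacle here: the argument is essentially an assembly of the earlier machinery, parallel to the proof of Lemma 4; the only points requiring care are checking the compact-support hypotheses that activate Proposition 1 and invoking Proposition 11 separately on each weighted circle $w_k\,d{\bf m}_{a_k;R_k}$ rather than on the plain Lebesgue measure ${\bf m}$.
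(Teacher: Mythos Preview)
Your proof is correct and follows essentially the same route as the paper's: reduce to the inequality of Proposition~1 (equivalently Corollary~1) for the pair $(\mu,\nu)$ with $\nu=\sum_{k}w_k\,d{\bf m}_{a_k;R_k}$, then split $p=(p-p(a_k))+p(a_k)$ on each circle, control the oscillatory part via the Wirtinger-type estimate of Proposition~11 and the constant part via $a_k\in bpe(\mu)$, and sum over $k$. Your write-up is in fact a bit more careful than the paper's in explicitly verifying the compact-support hypotheses needed to invoke Proposition~1 and in tracking the constants.
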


\begin{proof} We prove   condition (\ref{desigualdad}) for $(\mu_0,\mu)$ with   $\mu=\sum_{k=1}^{n} w_k(z)d{\bf  m}_{a_k;R_k}$. Thus, we have to find a constant $C>0$ such that 
$$
\int \vert p(z)\vert^2 d\mu \leq C(\int \vert p(z) \vert^2 d\mu_0+  \int \vert p'(z) \vert^{2} d\mu), \qquad p(z)\in \mathbb{P}[z].
$$

\noindent Let $k$ be fixed, since $p(z)=p(z)-p(a_k)+p(a_k)$, 
$$
\int \vert p(z) \vert^2 d\mu_1=\int \vert p(z) -p(a_k) + p(a_k) \vert^2 d\mu_1 \leq \int 2(\vert p(z) -p(a_k)\vert^2+\vert   p(a_k) \vert^2)\; d\mu_1.
$$

\noindent Thus, by taking $C_1=2\mu_1({\it supp}(\mu_1))$,

$$
\int \vert p(z) \vert^2 d\mu_1 \leq \sum_{k=1}^{n} \int 2\vert p(z) -p(a_k)\vert^2\; d\mu_1 + C_1\vert p(a_k) \vert^2.
$$ 

\noindent Next,  since $a_k$ is a bounded point evaluation of $\mu_0$ for $k=1,\dots,n$ there exists  $D>0$ such that 
$$
\sum_{k=1}^{n} \vert p(a_k) \vert^2 \leq D\int \vert p(z) \vert^2 \; d\mu_0 \leq D\Vert p(z) \Vert^2_{\mathbf{S}}.
$$

\noindent On the other hand, by 
the Wirtinger inequalities established in Proposition 11   for $\mu_k=w_k(z){\bf m}_{a_k;r_k}$ with  $k=1,\dots,n$, there exists $C>0$ such that 
$$
\sum_{k=1}^{n} \int \vert p(z) -p(a_k)\vert^2 w_k(z)d {\bf m}_{a_k;r_k} \leq  C \sum_{k=1}^{n} \int  \vert p'(z) \vert^2 d {\bf m}_{a_k;r_k} \leq C\Vert p(z) \Vert^{2
}_{\mathbf{S}}.
$$

\noindent Consequently, by taking $C'=2C_{1}D+2C$ we have that 

$$
\int \vert p(z) \vert^2 d\mu_1 \leq C(\int \vert p(z) \vert^2 d\mu_0+ \int \vert p'(z) \vert^2 d\mu_1 ), 
$$

\noindent as we required. 
\end{proof}

%\noindent In particular, if the first measures is supported 
%\begin{corollary}  Let $\mu=(\mu_0,\mu_1,\dots,\mu_k)$ be a vectorial measure with $\mu_0$ infinitely supported and the Sobolev norm: 
%$$
%\Vert p(z) \Vert^{2}_{\mathbf{S}}= \sum_{j=0}^{k} \vert p^{j)}(z)\vert^{2} d\mu_{j}.
%$$

%Assume that: 
%\begin{enumerate} 
%\item For every $j=1,\dots,k$ 
%$\mu_j=w_j{\bf m}_{a_j;r_j}$ with $w_j\in L^{\infty}({\bf m})$ and ${\it inf\; ess} w_j>0$.
%\item $a_j\in \mathbb{D}(a_{j-1},r_{j-1})$ for every $j=2,\dots,k$. 
%\item $a_1$ is a bpe of $\mu_0$
%\end{enumerate} 
%\noindent Then, the set of zeros or Sobolev orthogonal polynomials is bounded. 
%\end{corollary} 

\noindent In the particular case where the first measure is a Lebesgue measure, we obtain the following example.
\begin{ejem} Let $\mu_0={\bf m}_{b;r}$ 
 and $a_1,\dots,a_n\in \mathbb{D}(b;r)$ for $k=1,\dots, n$.  consider $R_1,\dots, R_n>0$ and the Sobolev inner norm 
 $$
 \Vert p(z) \Vert^{2}_{\mathbf{S}} = \int \vert p(z) \vert^{2} d{\bf m}_{b;r}+ \sum_{k=1}^{n} \int \vert p'(z) \vert^{2} d{\bf m}_{a_k;R_k},
 $$
 \noindent $\mathcal{D}$ is bounded on $(\mathbb{P}[z], \Vert \cdot \Vert_{\mathbf{S}})$. Consequently, the 
  set of zeros of Sobolev orthogonal polynomials is bounded. 
\end{ejem}

\begin{center}
 
\begin{tikzpicture}
    % Dibujar el círculo azul grande
    \draw[blue, thick] (-0.5,-0.3) circle (1.3cm);

    % Definir los centros y radios de las circunferencias rojas
    \coordinate (C1) at (0.2, 0.5); % Centro 1
    \coordinate (C2) at (-1.5, -0.2); % Centro 2
    \coordinate (C3) at (-0.4, -0.5); % Centro 3
    
    % Dibujar las circunferencias rojas
    \draw[red, thick] (C1) circle (0.5cm); % Circunferencia 1
    \draw[red, thick] (C2) circle (0.9cm); % Circunferencia 2
    \draw[red, thick] (C3) circle (1.7cm); % Circunferencia 3

    % Pintar los centros de las circunferencias en azul
    \fill[red] (C1) circle (2pt); % Centro 1
    \fill[red] (C2) circle (2pt); % Centro 2
    \fill[red] (C3) circle (2pt); % Centro 3

    % Etiquetas opcionales para los centros
    %\node[above right] at (C1) {Centro 1};
    %\node[below left] at (C2) {Centro 2};
    %\node[below right] at (C3) {Centro 3};
\end{tikzpicture}

\end{center}
\begin{remark} It is interesting to compare the last example with example 5.  In both examples the first measure is the Lebesgue measure ${\bf m}_{b;r}$. While in the first example the circles in the second measure  have to be contained in $\mathbb{D}(b;r)$, however in the Example 6 the situation is more general: the centers must be  contained in such disk but the radius can be arbitrarily large. 
\end{remark}

\noindent We finish with a very interesting example that shows the existence of vectorial measures such that the Sobolev norms are equivalent norms on the vector space $\mathbb{P}[z]$ and nevertheless the norms associated to the involved measures are not equivalent component by component. In order to provide this example, we give the precise definitions of comparable norms.

\begin{definition} Let $\mu=(\mu_0,\mu_1)$  and $\mu'=(\mu'_0,\mu'_1)$ be  vectorial measure and let  $\Vert \cdot \Vert_{\mathbf{S}}, \Vert \cdot \Vert_{\mathbf{S'}}$ the associated Sobolev norms. 
\begin{enumerate}
 \item  We say that  $(\mu_0,\mu_1)$ is comparable with $(\mu'_0,\mu'_1)$, denoted by $(\mu_0,\mu_1) \;{}^{\vee}_{\wedge} (\mu_0',\mu'_1)$, if the norms  $\Vert \cdot \Vert_{\mathbf{S}}$ and $\Vert \cdot \Vert_{\mathbf{S'}}$ are equivalent on $\mathbb{P}[z]$, i.e, 
 if there exist  $c,C>0$  
 $$
 c\Vert p(z) \Vert^{2}_{\mathbf{S}} \leq \Vert p(z) \Vert^{2}_{\mathbf{S'}} \leq C\Vert p(z) \Vert^{2}_{\mathbf{S}}, \qquad p(z) \in \mathbb{P}[z]. 
 $$

 \noindent More generally, for HPD matrices $\{\mathbf{M}_0,\mathbf{M}_1\} \;{}^{\vee}_{\wedge} \{\mathbf{M}'_{0},\mathbf{M}'_1\}$ if the associated Sobolev norms $\Vert \cdot \Vert_{\mathbf{M_{S}}}, \Vert \cdot \Vert_{\mathbf{M}_{S'}}$ are equivalent on $\mathbb{P}[z]$.
 \item  We say that $(\mu_0,\mu_1)$ is comparable with $(\mu'_0,\mu'_1)$ {\it component by component}  if $\Vert \cdot \Vert_{\mu_k}$ is equivalent to $\Vert \cdot \Vert_{\mu'_k}$ for $k=0,1$, i.e. there exist $c,C>0$ such that for every  $k=0,1$, 
$$
c \Vert p(z) \Vert^{2}_{\mu_k} \leq \Vert p(z) \Vert^{2}_{\mu'_k} \leq C\Vert p(z) \Vert^{2}_{\mu_k}, \qquad p(z) \in \mathbb{P}[z].
$$
\end{enumerate}
\end{definition}

\noindent In the following example, we show that both notions for vectorial measures, comparable and comparable component by component, do not coincide.

\begin{ejem} There  are two vectorial measures
$\mu=(\mu_0,\mu_1)$  and $\mu'=(\mu'_0,\mu'_1)$ such are comparable and yet they are not comparable component by component;  moreover, $\mu'_0=\mu_0+\mu_1$ and $\mu'_1=\mu_1$. 
In fact, consider $\mu_0={\bf m}_{0;\frac{1}{2}}={\bf m}_{\frac{1}{2}}$ and $\mu_1={\bf m}$ and $\Vert \cdot \Vert_{\mathbf{S}},\Vert \cdot \Vert_{\mathbf{S'}}$ the associated Sobolev norms.

\begin{enumerate}
\item  $({\bf m}_{\frac{1}{2}},{\bf m}), ({\bf m}_{\frac{1}{2}},{\bf m}_{\frac{1}{2}}+{\bf m})$ are comparable. 

\noindent First, 
$$
\Vert p(z) \Vert^{2}_{\mathbf{S}}
\leq  
\Vert p(z) \Vert^{2}_{{\bf m}_{\frac{1}{2}}+{\bf m}} + \Vert p'(z) \Vert^{2}_{{\bf m}}=  \Vert p(z) \Vert^{2}_{\mathbf{S'}}, \qquad p(z) \in \mathbb{P}[z].
$$

\noindent On the other hand, 
$$
\Vert p(z) \Vert^{2}_{{\bf m}_{\frac{1}{2}}+{\bf m}} + \Vert p'(z) \Vert^{2}_{{\bf m}} = \Vert p(z) \Vert^{2}_{{\bf m}_{\frac{1}{2}}} + 
\Vert p(z) \Vert^{2}_{{\bf m}}+
\Vert p'(z)\Vert_{{\bf m}}, 
$$
\noindent Now, By Lemma 4, since $0$ is a bpe of ${\bf m}_{\frac{1}{2}}$ it holds that there exists $C>0$ such that 
$$
\Vert p(z) \Vert^{2}_{{\bf m}} \leq C\Vert p(z) \Vert^2_{\mathbf{S}}
$$

\noindent Consequently, 
$$
\Vert p(z) \Vert^{2}_{\mathbf{S}}\leq 
\Vert p(z) \Vert^{2}_{\mathbf{S'}}\leq (C+1) \Vert p(z) \Vert^{2}_{\mathbf{S}}, \qquad p(z) \in \mathbb{P}[z]. 
$$

\noindent Then, $(\mu_0,\mu_1) \;{}^{\vee}_{\wedge} (\mu_0,\mu_0+\mu_1).$ 

\medskip
\item  $({\bf m}_{\frac{1}{2}},{\bf m}), ({\bf m}_{\frac{1}{2}},{\bf m}_{\frac{1}{2}}+{\bf m})$ are not comparable {\it component by component}. Assume the contrary, then there exists a constant 
 $C>0$ such that 
$$
\int \vert p(z) \vert^2 d({\bf m}_{\frac{1}{2}}+ {\bf m }) \leq C\int \vert p(z) \vert^2{\bf m}_{\frac{1}{2}}, \qquad p(z)\in \mathbb{P}[z].
$$
\noindent Without lost of generality we may assume that $C>1$, consequently  
$$
\int \vert p(z) \vert^{2} d{\bf m} \leq (C-1) \int \vert p(z) \vert^2 d{\bf m}_{\frac{1}{2}}, \qquad p(z)\in \mathbb{P}[z],
$$
\noindent which obviously is not true (consider for instance the polynomials $z^n, n\in \NN$).

\end{enumerate} 

\end{ejem}

\noindent We bring together the previous results in the following table, where the four notions introduced for vectorial measures that appear inside are {\it not} equivalent.

   \hspace{-0.5 cm} 
   \begin{center}
   \begin{tabular}{|c|} \hline
\mbox{} \\
$ \mathcal{D}$ is bounded on $(\mathbb{P}[z], \Vert \cdot \Vert_{{\bf S}})\Leftrightarrow \{\mathbf{M}(\mu_0),\mathbf{M}(\mu_1)\} \;{}^{\vee}_{\wedge}\{ \mathbf{M}(\mu_0+\mu_1),\mathbf{M}(\mu_1)\}$
\mbox{} \\
\mbox{} \\
\begin{tabular}{|c|} \hline
\mbox{} \\
$ \mu=(\mu_0,\mu_1)  \;{}^{\vee}_{\wedge} (\mu_0+\mu_1,\mu_1)$ component by component  
%{\bf Nevai} $\Longleftrightarrow$ $\Lim(\mathbf{D})=\mathbf{S}_R$. Examples: projections on $\mathbb{T}$ of
%some special atomic measures
%\cite{lubi} and \cite{otros}.
\\
\mbox{} \\
\begin{tabular}{|c|} \hline
\mbox{} \\
$\mu=(\mu_0,\mu_1),\;\text{ is Matrix Sequentially Dominated i.e. }$  \\$\exists C>0 \;s.t.\mathbf{M}(\mu_1) \leq C\mathbf{M}(\mu_0)$
 %{\bf Erd\"{o}s}  $\Longrightarrow$ $\Lim(\mathbf{D})=\mathbf{S}_R$ . \;\; Example  (not Szeg\"{o}): $w(\theta)=e^{-1/|\cos(\theta)|},$ 
%$\sum_{n=0}^{\infty} |\Phi_{n}(0)|^2=\infty$ 
\\
 \mbox{} \\
\begin{tabular}{|c|} \hline
\mbox{} \\ $\mu=(\mu_0,\mu_1),\;  \text{is Sequentially Dominated if} $ \\ $\bullet{\it supp}(\mu_1)\subseteq {\it supp}(\mu_0)$\\$\bullet\; d\mu_1=f_{1} d\mu_{0};f_{1}\in L^{\infty}(\mu_{0})$
%{\bf Szeg\"{o}}  $\Longleftrightarrow$  $\mathbf{D}=\mathbf{S}_R+\mathbf{K}_2$ with $\mathbf{K}_2$ H-S matrix. 
 %\;\; Example (not separating):  $w(\theta)=1+\sin(\theta)$
 \\
 %\mbox{} \\
 %%%%%%%%%%%%%%%%%%%%%%%
%\begin{tabular}{|c|} \hline
%\mbox{} \\
%{\bf  Separating}  $:=$ $%\lim_{n\to\infty} \lambda_{min}(\mathbf{T}_n)>0$. \;
 % Example(not Baxter): \; $w(\theta)=2+\cos(\theta)+\frac{|\cos(\theta)|}{2\cos(\theta)}$ \\
   %\mbox{} \\
   %%%%%%%%%%%%%%%%%%%%%%
  %\begin{tabular}{|c|} \hline
 % \mbox{} \\
%{\bf Baxter} $\Longrightarrow \mathbf{D}=\mathbf{S}_R+\mathbf{K}_1$ \;  \\
%\mbox{} \\
 %\mbox{} \\ \hline
%\end{tabular}
%%%%%%%%%%%%%%%%%%%%%%%%%%%%%%
 %\mbox{} \\
  %\mbox{} \\ \hline
 %\end{tabular}
 %%%%%%%%%%%%%%%%%%%%%%%
 \mbox{} \\
 \mbox{} \\ \hline
 \end{tabular}
 \mbox{} \\
 \mbox{} \\\hline
 \end{tabular}
 \mbox{} \\
 \mbox{} \\ \hline
 \end{tabular}
  \mbox{} \\
  \mbox{} \\ \hline
 \end{tabular}

\normalsize  
   \end{center}

\today

\end{document}